\def\cH{{\mathcal H}}
\def\cD{{\mathcal D}}
\def\cG{{\mathcal G}}
\def\cQ{{\mathcal Q}}
\def\cF{{\mathcal F}}
\def\dD{{\mathbb D}}
\def\dT{{\mathbb T}}
\def\sH{{\mathfrak H}}
 \def\sF{{\mathfrak F}}
\def\sG{{\mathfrak G}}
\def\sL{{\mathfrak L}}
\newtheorem{theorem}{Theorem}[section]
\newtheorem{proposition}[theorem]{Proposition}
\newtheorem{corollary}[theorem]{Corollary}
\newtheorem{lemma}[theorem]{Lemma}
\newtheorem{definition}[theorem]{Definition}
\numberwithin{equation}{section}
\renewenvironment {proof} {\begin{trivlist} \item[\hspace{\labelsep}%
\sc Proof.]}{$\Box$ \end{trivlist}}
\newcommand {\sk}[3]{\left#1#2\right#3}  
\newcommand {\wh}{\widehat}
\renewcommand {\l}{\lambda}
\renewcommand {\k}{\kappa}
\newcommand {\s}{\sigma}
\newcommand {\e}{\varepsilon}
\newcommand {\g}{\gamma}
\newcommand {\ov}{\overline}
\newcommand {\m}{\mu}
\newcommand {\G}{\Gamma}
\newcommand {\D}{\Delta}
\renewcommand {\r}{\rho}
\newcommand {\Pp}{\mathcal{P}_{\mathfrak{L}_2}}
\newcommand {\w}{\widetilde}
\newcommand {\p}{\perp}
\def\gr{{\rm gr}}
\def\sq{{\rm sq}}
\def\rank{{\rm rank}}
\def\span{{\rm span}}                  
\def\ran{{\rm ran\,}}
\def\ind{{\rm ind\,}}
\def\dom{{\rm dom\,}}
\def\ker{{\rm ker\,}}
\def\mul{{\rm mul\,}}
\newcommand {\zx}{{[*]}}
\begin{document}
\title[Abstract interpolation problem in generalized Schur classes.]
{Abstract interpolation problem in generalized Schur classes.}
\author{D.~Baidiuk}
\address{ Department of Mathematics \\ Donetsk National University \\ 24, Universitetskaya Str., 83001 Donetsk}

\address{Department of Mathematics and Statistics \\
University of Vaasa \\
P.O. Box 700, 65101 Vaasa \\
Finland} \email{dbaidiuk@uwasa.fi}


\keywords{Interpolation, unitary colligation, Pontryagin space,
reproducing kernel space, isometric operator.}

\subjclass{Primary 30E05; Secondary 47A48, 47B50, 46E22}
\begin{abstract}
An  indefinite variant of the abstract interpolation problem is
considered. Associated to this problem is a model Pontryagin space
isometric operator $V$. All the solutions of the problem are shown
to be in a one-to-one correspondence with a subset of the set of all
unitary extenions $U$ of $V$. These unitary extension $U$ of $V$ are
realized  as unitary colligations with the indefinite de
Branges-Rovnyak space $\cD(s)$ as a state space.
\end{abstract}

\maketitle

\section{Introduction.}

The abstract  interpolation problem in the Schur class $S$ have been
posed and considered by V.~Katsnelson, A.~Khejfets and P.~Yuditskij
\cite{KKhYu87}. It contains the most classical interpolation
problems such as the moment problem, the bitangential
Schur-Nevanlinna-Pick problem and others (see \cite{KhYu94},
\cite{Kh90}, \cite{Kh96} and \cite{Yu83}). The method of abstract
interpolation problem contains and develops ideas of the
V.~P.~Potapov's approach to interpolation problems \cite{KP74}, the
theory of  unitary colligation \cite{Br78}, and the theory of
reproducing kernel Hilbert spaces \cite{dBrR66}. In particular, the
results of D.~Z.~Arov and L.~Z.~Grossman on scattering matrices of
unitary operators \cite{AG92} were used in order to describe the set
of solutions of this problem. These results are closely related to
the M.G.~Kre\u{\i}n's theory of ${\sL}$-resolvent matrices for
symmetric operators \cite{Kr44} and \cite{KrS66} (see also
\cite{Sh70} and \cite{DM95}).

The present paper deals with the indefinite abstract interpolation
problem $AIP(\kappa)$, $\kappa\in{\Bbb Z}_+$ in generalized Schur
classes (see definition below). It is shown that this problem can be
reduced to the extension problem for a model Pontryagin space
isometric operator $V$, associated with the problem $AIP(\kappa)$.
As distinct from the Hilbert space case the correspondence between
minimal unitary extensions $U$ of $V$ and their scattering matrices
does not give anymore a parametrization of the solution set of the
problem $AIP(\kappa)$. The desired description is given in Section 5
by selecting of a subclass of the so-called ${\sL}$-regular unitary
extensions of $V$. The unitary extension $U$ of $V$ is realized in
the paper as a unitary colligation with the indefinite de
Branges--Rovnyak space $\cD(s)$ as a state space. The corresponding
construction is very close to that given in \cite{ADRS91} and
\cite{D2001}.
 The statement of the abstract interpolation problem in the present paper is different from the statement of
 this problem in \cite{D2001}. The problem data in this paper contain two
 different operators $M$ and $N$ whereas in the paper \cite{D2001} one
 of them equals an identity operator. The description of solutions
 of the Problem $AIP(\k)$ in the present formulation can be used for getting a
 description of the bitangential interpolation problem.

\section{Preliminaries.}
\begin{subsection}{Linear relations.}
Let $\cH_1$, $\cH_2$ be Hilbert spaces. A linear manifold
$T\subset\cH_1\oplus\cH_2$ is called a linear relation (shortly
l.r.) in $\cH_1\oplus\cH_2$ (from $\cH_1$ to $\cH_2$). We denote by
$\w{ \mathcal{C}}(\cH_1,\cH_2)$ $(\w {\mathcal{C}}(\cH))$ the set of
all closed linear relations in $\cH_1\oplus\cH_2$ (in
$\cH\oplus\cH$). For a linear relation $T\subset\cH_1\oplus\cH_2$ we
denote by $\dom T$, $\ran T$, $\ker T $ and $\mul T$ the domain, the
range, the kernel and the multivalued part of $T$ respectively.

If $T$ is a linear relation in $\cH_1\oplus\cH_2$, then the inverse
$T^{-1}$ and adjoint $T^*$ relations are defined as
$$
{T^{-1}=\sk\{{\begin{bmatrix} f' \\ f
\end{bmatrix}:\begin{bmatrix} f \\ f' \end{bmatrix}\in T}\}},\
T^{-1}\subset\cH_2\oplus\cH_1
$$
$$
T^*={\begin{bmatrix} g \\ g'
\end{bmatrix}\in\cH_1\oplus\cH_2:(f',g)=(f,g'), \begin{bmatrix} f \\ f'
\end{bmatrix}\in T},\ T^*\in\w {\mathcal{C}}(\cH_2,\cH_1).
$$
A closed linear operator $T$ from $\cH_1$ to $\cH_2$ is identified
with its graph $\gr T\in\w {\mathcal{C}}(\cH_1,\cH_2)$.

In the case $T\in\w {\mathcal{C}}(\cH_1,\cH_2)$ we write:

$0\in\r(T)$ if $\ker T={0}$ and $\ran T=\cH_2$;

$0\in\wh\r(T)$ if $\ker T={0}$ and $\ov\ran T=\ran T\neq\cH_2$;

$0\in\s_c(T)$ if $\ker T={0}$ and $\ov\ran T=\cH_2\neq\ran T$;

$0\in\s_p(T)$ if $\ker T\neq{0}$;

$0\in\s_r(T)$ if $\ker T\neq{0}$ and $\ov\ran T\neq\cH_2$.

For a l.r. $T\in\w {\mathcal{C}}(\cH)$ we denote by
$\r(T)=\{\l\in\mathbb{C}:0\in\r(T-\l)\}$ and
$\wh\r(T)=\{\l\in\mathbb{C}:0\in\wh\r(T-\l)\}$ the resolvent set and
the set of regular type points of $T$ respectively. Next,
$\s(T)=\mathbb{C}\setminus\r(T)$ stands for the spectrum of $T$.
\end{subsection}

\begin{subsection}{Linear relations in Pontryagin spaces.}

In this subsection we review some facts and notation from
\cite{AI86,Bog74}. Let $\mathcal{H}$ be a Hilbert space and
$j_{\mathcal{H}}$ be a signature operator in this space, (i.e.,
$j_{\mathcal{H}}=j_{\mathcal{H}}^*=j_{\mathcal{H}}^{-1}$). The space
$\mathcal{H}$ can be considered as a Kre\u{\i}n space
$(\mathcal{H},j_{\mathcal{H}})$ (see \cite{AI86}) with the inner
product $ [\varphi, \psi]_{\mathcal{H}}=
 (j_{\mathcal{H}} \varphi, \psi)_{\mathcal{H}}.
$ The signature operator $j_\mathcal{H}$ can be represented as
$j_\mathcal{H}=P_+-P_-$, where $P_+$ and $P_-$ are orthogonal
projections in $\mathcal{H}$. In the case when
 $\mbox{dim}P_-\mathcal{H}=\kappa<\infty$, the Kre\u{\i}n space
$(\mathcal{H},j_\mathcal{H})$ is called a Pontryagin space with the
negative index $\kappa$ and is denoted by
$\mbox{ind}_-\mathcal{H}=\kappa$.

 Let us consider two Pontryagin spaces $(\mathcal{H}_1,j_{\mathcal{H}_1})$, $(\mathcal{H}_2,j_{\mathcal{H}_2})$ and a linear relation
 $T$ from $\mathcal{H}_1$ to $\mathcal{H}_2$. Then an adjoint l.r.
 $T^{[*]}$ consists of pairs $\begin{bmatrix} g_2 \\g_1
\end{bmatrix}\in\cH_2\times\cH_1$ such that
$$
[f_2,g_2]_{\cH_2}=[f_1,g_1]_{\cH_1}\text{ for all }\begin{bmatrix}
f_1
\\f_2
\end{bmatrix}\in T.
$$
If $T^*:\cH_2 \to\cH_1$ is an adjoint linear relation of $T$ in the
Hilbert spaces $\cH_1$ and $\cH_2$ then
$T^{[*]}=j_{\cH_1}T^*j_{\cH_2}$.

The l.r. $T^{[*]}$ satisfies the following equations
\begin{equation}
\label{adjequ}
 (\dom T)^{[\perp]}=\mul T^{[*]}, \quad
 (\ran T)^{[\perp]}=\ker T^{[*]},
\end{equation}
 where the sign $[\perp]$  denotes orthogonality in Pontryagin spaces.
\begin{definition}\label{D:2.1}
 A l.r. $T$ from a Pontryagin space $(\cH_1,j_{\cH_1})$ to a
 Pontryagin space
  $(\cH_2,j_{\cH_2})$ is called an isometry if the equality
\begin{equation}
\label{iso}
 [ \varphi', \varphi']_{\cH_2} = [ \varphi, \varphi]_{\cH_1},
\end{equation}
holds for every $\begin{bmatrix} \varphi \\\varphi'
\end{bmatrix}\in
  T$ and it is called a contraction if the sign in the \eqref{iso} is substituted for $\le$. A l.r. $T$ is called
   a unitary l.r. from
  $(\cH_1,j_{\cH_1})$ to $(\cH_2,j_{\cH_2})$ if $T^{-1}
= T^{[*]}$. Clearly, a l.r. $T$ is an isometric l.r. if and only if
$T^{-1} \subset T^{[*]}$.
\end{definition}
We recall (\cite{AI86}) that the sets $\dD\setminus\wh\r(T)$ and
$\dD_e\setminus\wh\r(T)$ for an isometric operator $T$ in a
Pontryagin space $\ind_-\cH=\k$ consist of at most $\k$ points
belonging to $\s_p(T)$.

 The definition of unitary l.r. at first was introduced in
 \cite{Sh76}. In particular, in  \cite{Sh76} the following Proposition was proved
\begin{proposition}\label{Plo}
If $T$ is a unitary relation  from a Pontryagin space
$(\cH_1,j_{\cH_1})$ to a
 Pontryagin space
  $(\cH_2,j_{\cH_2})$ then
\begin{enumerate}
\item
$\dom T$ is closed if and only if \\$\ran T$ is closed;
\item
the equalities $\ker T=\dom T^{[\p]}$, $\mul T=\ran T^{[\p]} $ hold.
\end{enumerate}
\end{proposition}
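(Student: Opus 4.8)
The plan is to treat the two assertions of the proposition separately, disposing of item~(2) by a direct substitution and concentrating the real work on item~(1).

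For~(2) I would feed the unitarity identity $T^{-1}=T^{[*]}$ into the general adjoint relations \eqref{adjequ}. From the definition of the inverse relation one has $\mul T^{-1}=\ker T$ and $\ker T^{-1}=\mul T$, so replacing $T^{[*]}$ by $T^{-1}$ in \eqref{adjequ} gives at once $(\dom T)^{[\p]}=\mul T^{[*]}=\mul T^{-1}=\ker T$ and $(\ran T)^{[\p]}=\ker T^{[*]}=\ker T^{-1}=\mul T$, which are the two claimed equalities. This step uses nothing beyond \eqref{adjequ} and the bookkeeping of inverse relations.

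For~(1) I would first record two structural facts. Testing the isometry identity \eqref{iso} on the pairs $\begin{bmatrix}\varphi\\0\end{bmatrix}\in T$ and $\begin{bmatrix}0\\\varphi'\end{bmatrix}\in T$ gives $[\varphi,\varphi]_{\cH_1}=0$ and $[\varphi',\varphi']_{\cH_2}=0$; hence $\ker T$ and $\mul T$ are neutral subspaces, and in a Pontryagin space a neutral subspace has dimension at most the negative index, so both are finite-dimensional and closed. Second, $T^{-1}$ is again unitary from $(\cH_2,j_{\cH_2})$ to $(\cH_1,j_{\cH_1})$, since $(T^{-1})^{[*]}=(T^{[*]})^{-1}=(T^{-1})^{-1}$, and it interchanges $\dom$ and $\ran$; consequently it suffices to prove the single implication that $\ran T$ is closed whenever $\dom T$ is. The heart of the matter is then a finite-rank estimate. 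Regard $T$ as a (Hilbert) subspace of $\cH_1\oplus\cH_2$ and let $A_i$ be the restriction to $T$ of the coordinate projection onto $\cH_i$, so $\ran A_1=\dom T$, $\ran A_2=\ran T$, and $\|A_1t\|^2+\|A_2t\|^2=\|t\|^2$, i.e. $A_1^*A_1+A_2^*A_2=I$ on $T$. Writing $j_{\cH_i}=P_i^+-P_i^-$ with $P_i^-$ the negative projection (so $\rank P_i^-=\ind_-\cH_i<\infty$), the isometry \eqref{iso} rewrites, for $t=\begin{bmatrix}f\\g\end{bmatrix}\in T$, as $\|f\|^2-2\|P_1^-f\|^2=\|g\|^2-2\|P_2^-g\|^2$; thus $A_1^*A_1-A_2^*A_2$ is, as a quadratic form, $t\mapsto 2\|P_1^-f\|^2-2\|P_2^-g\|^2$, a finite-rank self-adjoint operator $2F$. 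Hence $A_1^*A_1=\tfrac12 I+F$ and $A_2^*A_2=\tfrac12 I-F$ with $F$ of finite rank. Each of these is a finite-rank perturbation of $\tfrac12 I$, so its essential spectrum is $\{1/2\}$; therefore $0$ is at worst an isolated eigenvalue of finite multiplicity, and $\ran(A_i^*A_i)$, equivalently $\ran A_i$, is closed. This yields the desired equivalence (in fact it shows both $\dom T$ and $\ran T$ are automatically closed), and the implication singled out above follows in particular.

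The main obstacle is that the whole force of~(1) resides in the \emph{finiteness} of the negative indices: this is exactly what makes $P_i^-$, and hence $F$, of finite rank, and for general Krein spaces the analogous statement fails. The technical points to watch are the passage from the quadratic-form identity to the operator identity $A_1^*A_1-A_2^*A_2=2F$ and the handling of the finite-dimensional neutral parts $\ker T$, $\mul T$ (equivalently, the kernels of $A_1,A_2$) when converting the spectral gap at $0$ into closed range. Should one prefer to avoid spectral language, the same finiteness can be used more directly: after splitting off the finite-dimensional $\mul T$, the operator part of $T$ is bounded below on a subspace of finite codimension in $\dom T$, which again forces closed range. Either route makes clear that the Pontryagin hypothesis, rather than any soft symmetry argument alone, is what drives the result.
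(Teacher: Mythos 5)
Your argument is correct, but there is nothing in the paper to compare it with: Proposition \ref{Plo} is quoted from Shmul'yan \cite{Sh76} without proof, so the assessment below concerns your argument on its own merits. Your item (2) is the expected computation: substituting $T^{[*]}=T^{-1}$ into \eqref{adjequ} and using $\mul T^{-1}=\ker T$, $\ker T^{-1}=\mul T$. For item (1) the finite-rank perturbation argument is sound: $T=(T^{[*]})^{-1}$ is closed, hence a Hilbert space in the graph norm; the identities $A_1^*A_1+A_2^*A_2=I$ and $A_1^*A_1-A_2^*A_2=2\left(A_1^*P_1^-A_1-A_2^*P_2^-A_2\right)$ (the second obtained by polarization from the quadratic-form identity, which is legitimate since both sides are self-adjoint on a complex space) exhibit each $A_i^*A_i$ as a self-adjoint finite-rank perturbation of $\tfrac12 I$, so $0$ is at worst an isolated eigenvalue of finite multiplicity, $\ran (A_i^*A_i)$ is closed, and therefore so is $\ran A_i$. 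Two remarks. First, you prove strictly more than the stated equivalence: in the Pontryagin setting both $\dom T$ and $\ran T$ come out closed unconditionally, so the ``if and only if'' of item (1) is genuinely two-sided only in Shmul'yan's original Kre\u{\i}n-space setting, where $P_i^-$ need not have finite rank and your operator $F$ need not be compact; your closing paragraph correctly isolates this as the place where the Pontryagin hypothesis does the work. Second, the preliminary reduction via the unitarity of $T^{-1}$ (using $(T^{-1})^{[*]}=(T^{[*]})^{-1}$), while correct, is made redundant by the main argument, which never invokes the hypothesis that $\dom T$ is closed. I see no gaps.
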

From Proposition \ref{Plo}, we get
\begin{corollary}
If $T$ is a unitary l.r. in a Pontryagin space then the condition
$\mul T\neq\{0\}$ is equivalent to the condition $\ker T\neq\{0\}$.
Moreover,  the equality $\dim\mul T=\dim\ker T$ holds.
\end{corollary}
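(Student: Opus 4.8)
The plan is to deduce both assertions from part~(2) of Proposition~\ref{Plo}, i.e.\ from the identities $\ker T=(\dom T)^{[\perp]}$ and $\mul T=(\ran T)^{[\perp]}$, and to obtain the equivalence as a consequence of the dimension equality (once $\dim\mul T=\dim\ker T$ is known, each of the two subspaces is trivial precisely when the other is). First I would observe that $\ker T$ and $\mul T$ are \emph{neutral} subspaces: if $\begin{bmatrix}\varphi\\0\end{bmatrix}\in T$, then \eqref{iso} gives $[\varphi,\varphi]_{\cH}=[0,0]_{\cH}=0$, and symmetrically every vector of $\mul T$ is neutral; by polarization each is a neutral subspace. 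Since $\ind_-\cH=\k<\infty$, a neutral subspace has dimension at most $\k$, so $d:=\dim\ker T$ and $m:=\dim\mul T$ are finite.

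Next I would realize $\ker T$ and $\mul T$ as the isotropic parts of $\overline{\dom T}$ and $\overline{\ran T}$. Using $(\dom T)^{[\perp]}=(\overline{\dom T})^{[\perp]}$ together with the Krein-space identity $L^{[\perp][\perp]}=\overline L$, Proposition~\ref{Plo}(2) yields $\overline{\dom T}=(\ker T)^{[\perp]}$, so the isotropic part $\overline{\dom T}\cap(\overline{\dom T})^{[\perp]}=\overline{\dom T}\cap\ker T$ equals $\ker T$; likewise $\mul T$ is the isotropic part of $\overline{\ran T}$. Factoring it out, the quotients $\overline{\dom T}/\ker T$ and $\overline{\ran T}/\mul T$ become \emph{nondegenerate} Pontryagin spaces, and the standard index count for a $d$-dimensional neutral subspace (see \cite{AI86}) shows that their negative indices equal $\k-d$ and $\k-m$.

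Finally I would transport one index onto the other by means of $T$ itself. For $\begin{bmatrix}\varphi\\\varphi'\end{bmatrix}\in T$ the assignment $[\varphi]\mapsto[\varphi']$ is a well-defined linear bijection $\h T\colon\dom T/\ker T\to\ran T/\mul T$: it is single-valued because $\mul T$ absorbs the ambiguity in $\varphi'$, it descends to the quotient by $\ker T$ because $\varphi\in\ker T$ forces $\varphi'\in\mul T$, and it preserves the quotient inner products by \eqref{iso} (here $\dom T\subset(\ker T)^{[\perp]}$, which is exactly Proposition~\ref{Plo}(2), makes the quotient form well defined). Since $\dom T/\ker T$ and $\ran T/\mul T$ are dense in the two Pontryagin spaces above, $\h T$ should carry negative-definite subspaces to negative-definite subspaces of the same dimension, and so should $\h T^{-1}$; hence $\k-d=\k-m$ and $d=m$. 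The hard part will be this last comparison: a maximal uniformly negative subspace of $\overline{\dom T}/\ker T$ need not lie inside the dense subspace $\dom T/\ker T$, so I would first approximate it, in the finite Grassmannian where negative-definiteness is an open condition, by an equidimensional negative-definite subspace contained in $\dom T/\ker T$, and only then apply $\h T$. With $d=m$ in hand, $\mul T\neq\{0\}\Leftrightarrow\ker T\neq\{0\}$ follows at once.
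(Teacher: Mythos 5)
Your argument is correct and follows the same route as the paper, which simply states that the corollary follows from Proposition \ref{Plo} without supplying any details. Your elaboration---neutrality and hence finite dimensionality of $\ker T$ and $\mul T$, the index count $\ind_-\bigl((\ker T)^{[\perp]}/\ker T\bigr)=\k-\dim\ker T$, and the isometric bijection between the quotients $\dom T/\ker T$ and $\ran T/\mul T$ (with the Grassmannian approximation to handle the fact that these are only dense)---is a valid way to fill in what the paper leaves implicit.
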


\end{subsection}

\begin{subsection}{The generalized Schur class.}

Recall that a Hermitian kernel
$K_\omega(\l):\Omega\times\Omega\to\mathbb{C}^{m\times m}$ is said
to have $\k$ negative squares if for every positive integer $n$ and
every choice of $\l_j \in \Omega$ and $u_j \in \mathbb{C}^m$
$(j=1,...,n)$ the matrix
\begin{equation*}
\left( [K_{\l_j}(\l_k)u_j,u_k]_{\cH} \right)_{j,k=1}^n
\end{equation*}
has at most $\kappa$ negative eigenvalues and for some choice of
 $\l_1,...,\l_n \in \Omega$ and $u_1,...,u_n \in \mathbb{C}^m$ exactly $\k$ negative eigenvalues.
 In this case we write
 $$
\sq_-K=\k.
 $$

Let $\k$ be a nonnegative integer, $\sL_2=\mathbb{C}^q$,
$\sL_1=\mathbb{C}^p\ (p,q\in\mathbb{Z}_+)$ and let
$\mathcal{B}(\sL_2,\sL_1)$ be the set of $p\times q$-matrices. A
$\mathcal{B}(\sL_2,\sL_1)$ valued function holomorphic in a
neighborhood of $0$ is said to belong to the generalized Schur class
$S_\k(\sL_2,\sL_1)$ in the unit disc if the kernel
\begin{equation}\label{kerLambda}
{\mathsf
\Lambda}_\omega^s(\l)=\frac{I_{p}-s(\l)s(\omega)^*}{1-\l\ov\omega}\quad(\l,w\in\Omega_s\subset\dD)
\end{equation}
has $\kappa$ negative square on $\Omega_s$ (see ~\cite{KrL72}). The
class $S(\sL_2,\sL_1):=S_0(\sL_2,\sL_1)$ consists of usual Schur
function.
 An example of a generalized Schur function is provided by the Blaschke-Potapov product
\begin{equation}\label{BPprod}
b(\lambda)=\prod b_j(\lambda),\quad
b_j(\lambda)=I-P_j+\frac{\lambda-\alpha_j}{1-\overline{\alpha}_j\lambda}P_j,
\end{equation}
where $\alpha_j \in \dD$, $P_j$ orthoprojections in $\mathbb{C}^p$
$(j=1,...,k)$. The factor $b_j(\cdot)$ is called simple if $P_j$ has
rank one. Although $b(\cdot)$ can be written as a product of simple
factors in many ways; the number of this factors is the same for
every representation \eqref{BPprod}. It is called the degree of the
Blaschke-Potapov product $b(z)$ \cite{P55}.

A theorem of Kre\u{\i}n-Langer \cite{KrL72} guarantees that every
generalized Schur function $s(\cdot)\in S_{\kappa}^{p\times
q}(\sL_2,\sL_1)$ admits a factorization of the form
\begin{equation}\label{KLleft}
s(\l)=b_l(\l)^{-1}s_l(\l) \quad (\l\in\Omega_s),
\end{equation}
where $b_l(\cdot)$ is a Blaschke-Potapov product of degree $\kappa$,
 $s_l(\cdot)$ is in the Schur class $S(\sL_2,\sL_1)$ and
\begin{equation}\label{KLcanon}
\ker s_l(\l)^*\cap \ker b_l(\l)^*=\{0\}\quad (\l\in\Omega_s).
\end{equation}
The representation ~\eqref{KLleft} is called a left
Kre\u{\i}n-Langer factorization. The constraint ~\eqref{KLcanon} can
be expressed in the equivalent form
\begin{equation}\label{KRcanon2}
\rank [b_l(\l), s_l(\l)]=p \quad (\l\in\Omega_s).
\end{equation}

If $\alpha_j \in \dD$ $(j=1,...,n)$ are all zeros of $b_l(\cdot)$ in
$\dD$, then the condition ~\eqref{KLcanon} ensures that
$\Omega_s=\dD\setminus \{\alpha_1,...,\alpha_n\}$. The left
Kre\u{\i}n-Langer \eqref{KLleft} is essentially unique in a sense
that $b_l(\cdot)$ is defined uniquely up to a left unitary factor
$V\in \mathbb{C}^{p\times p}$.

Similarly, every generalized Schur runction  $s(\cdot)\in
S_{\kappa}^{p\times q}(\sL_2,\sL_1)$ a right Kre\u{\i}n-Langer
factorization
\begin{equation}\label{KLright}
s(\l)=s_r(\l)b_r(\l)^{-1}, \quad(\l \in \Omega_s),
\end{equation}
where $b_r(\cdot)$ is a Blaschke-Potapov product of degree $\kappa$,
$s_r(\cdot)$is in the Schur class $S(\sL_2,\sL_1)$ and
\begin{equation}\label{KLcanonR}
\ker s_r(\l)\cap \ker b_r(\l)=\{0\}\quad (\l\in\Omega_s).
\end{equation}
This condition can be rewritten in the equivalent form
\begin{equation}\label{KLcanon2R}
\rank [b_r(\l), s_r(\l)]=q \quad (\l\in\Omega_s).
\end{equation}
Under assumption \eqref{KLright} the matrix valued function
$b_r(\cdot)$ is uniquely defined up to a right unitary factor $V'
\in \mathbb{C}^{q\times q}$.

Let $\Pi_+$ and $\Pi_-$ denote the orthogonal projections from
$L_2^k$ onto $H^k_2$ and $(H^k_2)^\p$ respectively, where $k$ is a
positive integer that will be understood from the context. Let us
introduce the Hilbert spaces
\begin{equation}\label{Hb}
\cH(b_r):=H_2^q\ominus b_rH_2^q,\quad\cH_*(b_l):=(H_2^p)^\p\ominus
b_l^*(H_2^p)^\p
\end{equation}
and the operators
\begin{equation}\label{Xrl}
X_r:h\in\cH(b_r)\to\Pi_-sh,\quad X_l:h\in\cH_*(b_l)\to\Pi_+s^*h
\end{equation}
based on $s(\cdot)$.

The next operators will play an important role.
\begin{definition}
Let
\begin{equation}\label{Gamma l}
\G_l:f\in L_2^q\to X_l^{-1}P_{\cH(b_r)}f\in\cH_*(b_l);
\end{equation}
\begin{equation}\label{Gamma r}
\G_r:g\in L_2^p\to X_r^{-1}P_{\cH_*(b_l)}g\in\cH(b_r),
\end{equation}
where $X_l$ and $X_r$ are defined by ~\eqref{Xrl}.
\end{definition}
This operators $\G_r$ and $\G_l$ are using for introducing a metric
in some space.
\end{subsection}

\begin{subsection}{Unitary colligations in Pontryagin spaces.}
The theory of unitary colligations in Hilbert spaces was introduced
in the paper \cite{L46} and had further development in the papers
\cite{BrGK70,Br78}. The theory of unitary colligations in Pontryagin
spaces was built in \cite{Kuzh96} and \cite{ADRS91}, in the latter
the functional models of these colligations were studied.

In the present paper we use the notation $\wh\cD(s)$ which is
different from that used in the papers \cite{ADRS91} and
\cite{D2001}.

 Let us recall  some basic
notions from the theory of unitary colligations (see \cite{Br78},
\cite{DLS}). Let $\cH$ be a Pontryagin space with the negative index
$\kappa$ (see [4,5]),
 let $\sL_1$, $\sL_2$ be Hilbert spaces,
and let $U= \begin{bmatrix}
T & F\\
G & H
\end{bmatrix}
$ be a unitary operator from $\cH\oplus\sL_2 $ into
$\cH\oplus\sL_1$. Then the quadruple $\Delta=(\cH,\sL_2,\sL_1, U)$,
where  $\cH$ denotes the so--called state space and $\sL_2$, $\sL_1$
stand for the incoming and outgoing spaces, respectively, is said to
be a {\it unitary colligation}.

The colligation $\Delta$ is said to be {\it simple}, if there is no
reducing subspace $\cH_1\subset\cH$. The colligation $\Delta$ is
simple (see \cite{DLS}) if and only if
\begin{equation}\label{span1}
(\cH_\Delta:=)\ov{\hbox{span}}\left\{T^nFh_2,{T^{[*]}}^nG^{[*]}h_1:\,\,
h_1\in\sL_1,h_2\in\sL_2,\,\,n\in{\Bbb Z}_+\right\}=\cH.
\end{equation}
The operator valued function
\begin{equation}\label{ChF}
 s(z)=H+\l G(I-\l
T)^{-1}F:\sL_2\to\sL_1\quad (1/\l\in\rho(T))
\end{equation}
 is said to be the
characteristic function of the colligation (or the scattering matrix
of the unitary operator $U$ with respect to the channel subspaces
$\sL_2$, $\sL_1$ see \cite{AG92}). If the colligation $\Delta$ is
simple then $s\in S_{\kappa}(\sL_2,\sL_1)$. One can rewrite the
formula \eqref{ChF} in the form
\begin{equation}\label{harfun}
s(\l )= P_{\sL_1}U(I-\l P_\cH U)^{-1}P_{\sL_2}
     =P_{\sL_1}(I-\l UP_\cH )^{-1}UP_{\sL_2},
\end{equation}
since
$$
(I-\l P_\cH U)^{-1} = \begin{bmatrix}
(I-\l T)^{-1} & \l(I-\l T)^{-1}F\\
0 & I
\end{bmatrix} ,
$$
\begin{equation}\label{blok-matr}
U(I-\l P_\cH U)^{-1}= \begin{bmatrix}
T(I-\l T)^{-1} & F+\l T(I-\l T)^{-1}F\\
G(I-\l T)^{-1} & H+\l G(I-\l T)^{-1}F
\end{bmatrix}.
\end{equation}
Here $P_{\cH}$, $P_{\sL_i}$ are orthogonal projections from
$\cH\oplus\sL_i$ onto $\cH$ and $\sL_i$ $(i=1,2)$, respectively.
\end{subsection}

\begin{subsection}{The de Branges-Rovnyak space $\cD(s)$.}
The symbol $A^{[-1]}$ stands for the Moore-Penrose pseudoinverse of
the matrix $A$ (см. \cite{G88}),
\begin{equation}\label{D(s)1}
\Delta_s(\m):=\begin{bmatrix}
I_p&-s(\m) \\
-s(\m)^*&I_q
\end{bmatrix}
\end{equation}
a.e. on $\mathbb{T}$ for $s(\cdot)\in S_\k^{p\times q}$.
\begin{definition}\label{dBR}
Let a matrix valued function $s(\cdot)\in S_\k^{p\times q}$ admit
left and right Kre\u{\i}n-Langer factorizations \eqref{KLleft} and
\eqref{KLright}. Define $\cD(s)$ as the set of vector valued
functions $f(t)=\begin{bmatrix}
f_+(t) \\
f_-(t)
\end{bmatrix}$ such that the following conditions hold:
\begin{enumerate}
\item
$b_lf_+\in H_2^p;$
\item
$b_r^*f_-\in (H_2^q)^\p;$
\item
$f(t)\in\ran\Delta_s(\m)$ a.e. on $\mathbb{T}$ and the following
integral
$$
\frac{1}{2\pi}\int\limits_\mathbb{T}f(\m)^*\Delta_s(\m)^{[-1]}f(\m)d\m
$$
converges.
\end{enumerate}
The inner product in $\cD(s)$ is defined by
\begin{equation}\label{D(s)2}
[
f,g]_{\cD(s)}=\frac{1}{2\pi}\int\limits_\mathbb{T}g(\m)^*\left(\Delta_s(\m)^{[-1]}+\begin{bmatrix}
0&\G_r^* \\
\G_r&0
\end{bmatrix}\right)f(\m)d\m,
\end{equation}
where the operator $\G_r$ was defined in \eqref{Gamma r}.
\end{definition}
As has been shown in \cite{DD09} the space $\cD(s)$ is a Pontryagin
space with the negative index $\k$. In the case when $\k=0$ the
space $\cD(s)$ was introduced in \cite{dBrR66} (see also
\cite{KKhYu87}).
\end{subsection}

\begin{subsection}{The generalized Potapov class and generalized $J$-inner functions.}
Let $\k, m\in \mathbb{N}$ and $J$ be a $m\times m$ signature matrix
( i.e., $J=J^*$ и $JJ^*=I_m$).
\begin{definition}
Recall that a meromorphic in $\dD$ $m\times m$-valued matrix
function $W(\cdot)$ belongs the generalized Potapov class
 $\mathcal{P}_\k(J)$ \cite{AD86}, if the kernel
\begin{equation}\label{KerK}
K_\omega^W(\l)=\frac{J-W(\l)JW(\omega)^*}{1-\l\ov\omega}
\end{equation}
has $\k$ negative squares in $\Omega_W$, where $\Omega_W$ is the
domain of holomorphy of $W$ in $\dD$.
\end{definition}
\begin{definition}
\cite{AD86,DD09} A meromorphic in $\dD$ $m\times m$-valued matrix
function $W(\cdot)$ is called $J$-inner matrix valued function (it
is denoted by $W\in\mathcal{U}_\k(J)$), if it belongs the
generalized Potapov class $\mathcal{P}_\k(J)$ and
\begin{equation}\label{Jin}
J-W(\m)JW(\m)^*=0
\end{equation}
a.e. $\m\in \mathbb{T}$. This class is denoted by
$\mathcal{U}_\k(J)$.
\end{definition}

\end{subsection}
\begin{subsection}{Reproducing kernel Pontryagin spaces.}
A Pontryagin space $(\cH, [\cdot , \cdot]_{\cH})$ of
$\mathbb{C}^m$-valued functions defined in a subset $\Omega$ of
$\mathbb{C}$ is called a reproducing kernel Pontryagin space if
there exists a Hermitian kernel $K_\m(\l): \Omega\times\Omega
\rightarrow \mathbb{C}^{m\times m}$ such that

\begin{enumerate}
    \item $K_\m(\l)u \in \cH$, $ \text{ for every } \m\in \Omega,$ $u \in \mathbb{C}^m$;
    \item $[f, K_\m u]_{\cH}=u^*f(\m),$ $\text{ for every } f(\cdot)\in \cH,$ $\m\in \Omega$,
$u\in\mathbb{C}^m$.
\end{enumerate}

It is known (see \cite{Sch64}) that for every Hermitian kernel
$K_\m(\l)\Omega\times\Omega\to \mathbb{C}{m\times m}$ with a finite
number of negative squares on $\Omega\times\Omega$ there is a unique
Pontryagin space $\cH$ with reproducing kernel $K_\m(\l)$, and that
$\ind_-\cH =\sq_-K =\kappa$. In the case $\k=0$ this fact is due to
Aronszajn \cite{A50} (see also \cite{D2001}).
\end{subsection}
\begin{subsection}{Space $\w\cD(s)$}
Let $s \in S_\kappa^{p\times q}(\dD)$ be the characteristic function
of a unitary colligation $\Delta=(\cH,\sL_2,\sL_1;T,F,G,H)$. Let us
consider the kernel $D_s(\lambda,\m)$ on $\Omega_s\times \Omega_s$
defined by the matrix
\begin{equation}\label{Ds}
D_s(\m,\l)=\begin{bmatrix}
            \frac{I_{\sL_1}-s(\l)s(\m)^*}{1-\l\ov\m}&-\m\frac{s(\l)-s(\m)}{\l-\m}\\
            -\ov\l\frac{s(\l)^*-s(\m)^*}{\ov\l-\ov\m}&\ov\l\m\frac{I_{\sL_2}-s(\l)^*s(\m)}{1-\ov\l\m}
           \end{bmatrix},\ (\l,\m\in\Omega(s)).
\end{equation}
The introduced kernel is similar to the kernels from \cite{D2001}
and \cite{DD09}. A Pontryagin space corresponding to this
reproducing kernel $D_s(\lambda,\m)$ is denoted by $\w\cD(s)$.

Let us define two operator functions $G_1(z):{\sL}_1\to{ \cH}$ и
$G_2(z): {\sL}_2\to{ \cH}$
\begin{equation}\label{G1G2}
\begin{split}
G_1(\l)^\zx  &=P_{{\sL}_1}U(I-\l P_{\cH} U)^{-1}|_{{\cH}}\quad
(1/\l\in\rho(T)),\\
G_2(\l)  &=-P_{{\cH}}U(I-\l P_{\cH} U)^{-1}|_{{\sL}_2}\quad
(1/\l\in\rho(T)).
\end{split}
\end{equation}
It follows from \eqref{blok-matr}: that
$$
G_1(\l)^\zx=G(I-\l T)^{-1},\quad G_2(\l)=-(I-\l T)^{-1}F\quad
(1/\l\in\rho(T))
$$
and the formula \eqref{span1} for the subspace ${\cH}_\Delta$ can be
rewritten as
$$
{\cH}_\Delta=\overline{\hbox{span}}\left\{G_1(\l)h_1,G_2(\l)h_2:\,\,h_1\in{\sL}_1,
        h_2\in{\sL}_2,\,\,1/\l\in\rho(T)\right\}.
$$
As is easily checked for every $\w f=\begin{bmatrix}
        \w f_1 \\
        \w f_2
       \end{bmatrix},\
\w g=\begin{bmatrix}
       \w g_1 \\
       \w g_2
      \end{bmatrix}
\in \sL_1\oplus\sL_2$ the following identity holds
\begin{equation}\label{fact kern}
(D_s(\m,\l)\w f,\w g)_{\sL_1\oplus\sL_2}=[G_1(\m)\w f_1+\m G_2(\m)\w
f_2,G_1(\l)\w g_1+\l G_2(\l)\w g_2]_\cH
\end{equation}
It follows from \eqref{fact kern} that the kernel $D_s(\m,\l)$ has
at most $\k$, and if the colligation $\Delta$ is simple exactly
$\k$, negative squares on $\Omega(s)$.

The next Theorem is the reformulation of the Theorem from the paper
\cite{DD09} for the reproducing kernel space $\w\cD(s)$.
\begin{theorem}\label{hatD} Let $s\in
S_\k^{p\times q}$, then the de Branges-Rovnyak space $\cD(s)$ is
unitarily equivalent to the reproducing kernel space $\w\cD(s)$ via
the mapping эквивалентность устанавливается отображением
\begin{equation}\label{DhatD}
\mathcal{T}: \w f=\begin{bmatrix}
       \w f_1 \\
       \w f_2
      \end{bmatrix}
      \in \w\cD(s)\to
      f=\begin{bmatrix}
         f_+ \\
         f_-
        \end{bmatrix}\in\cD(s),
\end{equation}
where $\w f_1$ is the meromorphic continuation of $f_+$ to
$\Omega_s$, and $\w f_2^*$ is the meromorphic continuation of
$f_-^*$ to $\Omega_s$ such that $\w f$ is a nontangential limit of
$f$ from the unit disk.
\end{theorem}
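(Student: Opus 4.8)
The plan is to show that $\mathcal{T}$ is a well-defined bijective isometry by reducing everything to the reproducing kernels, which span dense linear manifolds in both spaces. Recall that in $\w\cD(s)$ the sections $D_s(\cdot,\l)u$ (for $\l\in\Omega_s$, $u\in\mathbb{C}^{p+q}$) span a dense manifold and satisfy $[D_s(\cdot,\l)u,D_s(\cdot,\omega)v]_{\w\cD(s)}=v^*D_s(\omega,\l)u$, whereas on the $\cD(s)$ side the metric is the boundary integral \eqref{D(s)2}. Thus it suffices to (i) check that $\mathcal{T}$ carries each kernel section of $\w\cD(s)$ to a genuine element of $\cD(s)$, (ii) verify that the inner product is preserved on these sections, and (iii) conclude by density together with the fact that both spaces are Pontryagin spaces of the same negative index $\k$.

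First I would settle well-definedness. By the left and right Kre\u{\i}n--Langer factorizations \eqref{KLleft} and \eqref{KLright}, conditions (1) and (2) of Definition \ref{dBR} say precisely that $f_+$ has a meromorphic continuation into $\dD$ whose only possible poles are the zeros of $b_l$, and that $f_-^*$ has a meromorphic continuation whose only possible poles are the zeros of $b_r$. Hence for $f=(f_+,f_-)\in\cD(s)$ the continuations $\w f_1$ of $f_+$ and $\w f_2$ (with $\w f_2^*$ the continuation of $f_-^*$) are holomorphic on $\Omega_s$, so the correspondence $f\leftrightarrow\w f$ of \eqref{DhatD} makes sense in both directions; conversely the nontangential limits of the components of $\w f\in\w\cD(s)$ recover $(f_+,f_-)$. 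Since $\w\cD(s)$ is spanned by kernel sections, for which the blocks of $D_s(\cdot,\l)u$ are explicit expressions in $s$, one checks directly that their boundary limits satisfy the membership conditions (1)--(3) of Definition \ref{dBR}, so $\mathcal{T}$ indeed maps into $\cD(s)$.

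The heart of the argument is the isometry identity on kernel sections. Here I would exploit the factorization \eqref{fact kern}, which realizes $D_s(\m,\l)$ through the state space $\cH$ of the simple colligation $\Delta$ via the operators $G_1,G_2$, and match it against the boundary form \eqref{D(s)2}. Concretely, I would compute $[\mathcal{T}(D_s(\cdot,\l)u),\mathcal{T}(D_s(\cdot,\omega)v)]_{\cD(s)}$ by substituting the boundary values of the blocks of $D_s$ into \eqref{D(s)2}, evaluating the $\dT$-integrals by the Cauchy kernel, and showing that the result equals $v^*D_s(\omega,\l)u$. This is essentially the content of the theorem of \cite{DD09}, reformulated for the kernel $D_s$ of \eqref{Ds}; the similarity of $D_s$ to the kernels used there is what lets me borrow that computation rather than redo it from scratch.

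I expect the main obstacle to be precisely this inner-product computation, for two reasons. The weight $\Delta_s(\m)^{[-1]}$ involves the Moore--Penrose pseudoinverse, so one must track the range condition $f(\m)\in\ran\Delta_s(\m)$ and verify that the correction block $\begin{bmatrix}0&\G_r^*\\ \G_r&0\end{bmatrix}$ in \eqref{D(s)2}, built from the operator $\G_r$ of \eqref{Gamma r}, exactly accounts for the meromorphic (rather than merely $L_2$) nature of the continuations at the zeros of $b_l$ and $b_r$. Once the isometry holds on the dense manifold of kernel sections, it extends by linearity and continuity; and since both $\cD(s)$ and $\w\cD(s)$ are Pontryagin spaces with negative index $\k$ (the former by \cite{DD09}, the latter from the negative-square count following \eqref{fact kern}), an isometry with dense domain and dense range between such spaces is automatically unitary, which completes the proof.
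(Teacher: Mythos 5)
The paper itself gives no proof of this theorem: it is presented only as a reformulation of a theorem from \cite{DD09}, so the only ``approach'' on record is that citation. Your outline (well-definedness of $\mathcal{T}$ via the Kre\u{\i}n--Langer factorizations, isometry checked on the dense span of kernel sections by the boundary-integral computation of \cite{DD09}, and unitarity from the equality of the finite negative indices) is a correct and standard way to organize the argument, and it defers to \cite{DD09} at exactly the point where the paper does.
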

\end{subsection}

\section{Functional model of a unitary colligation $\Delta$.}
 In this section we will define the Fourier representation of an
unitary colligation and recall its functional model like in
\cite{D2001}.

 Recall (see \cite{AI86}) that a subspace
$\cH_1$ of the Pontryagin space $\cH$ is called regular if it is
orthocomplemented.
\begin{proposition}\label{PropFM1}
Let $\Delta=(\cH,\sL_2,\sL_1;U)$ be a unitary colligation such that
$\cH$ is a Pontryagin space and let $s(\cdot)$ be the corresponding
characteristic function. If $\cH_\Delta$ is a regular subspace of
$\cH$ then the space $\cD(s)$ can be identified with the space $\cD$
of vector functions
\begin{equation}\label{Fur'e}
(\cF h)(\l)=\begin{bmatrix}
              G_1(\l)^\zx h  \\
              \ov\l G_2(\l)^\zx h
            \end{bmatrix}      =\begin{bmatrix}
                                 G(I-\l T)^{-1}h \\
                                 -\l F^\zx(I-\ov \l T^\zx)^{-1}
                                \end{bmatrix}                 \ (h\in\cH_\D),
\end{equation}
equipped with the inner  product
\begin{equation}\label{skalarPr}
[\cF h,\cF g]_{\cD(s)}=[h,g]_\cH\ (h,g\in\cH_\D).
\end{equation}
\end{proposition}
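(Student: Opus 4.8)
The plan is to recognize $\cF$ as the observability operator of the colligation and to identify it, via Theorem \ref{hatD}, with a unitary map onto the reproducing kernel space $\w\cD(s)$. Writing $\Phi(\l):=[\,G_1(\l)\ \ \l G_2(\l)\,]:\sL_1\oplus\sL_2\to\cH$, so that $\Phi(\l)\w f=G_1(\l)\w f_1+\l G_2(\l)\w f_2$, the first formula in \eqref{Fur'e} says precisely $(\cF h)(\l)=\Phi(\l)^\zx h$, since $\Phi(\l)^\zx=\big[\,G_1(\l)^\zx\ \ \ov\l G_2(\l)^\zx\,\big]^{\top}$. The coincidence of the two displayed expressions in \eqref{Fur'e} is then just the substitution of the explicit forms $G_1(\l)^\zx=G(I-\l T)^{-1}$ and the adjoint of $G_2(\l)=-(I-\l T)^{-1}F$ recorded after \eqref{G1G2}. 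Thus it suffices to prove that $\cF:\cH_\D\to\w\cD(s)$ is a unitary operator of Pontryagin spaces satisfying \eqref{skalarPr}; composing with the unitary equivalence $\mathcal{T}$ of Theorem \ref{hatD} then delivers the identification of $\cD(s)$ with $\cD$.

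First I would check that $\cF$ carries the generators of $\cH_\D$ to the kernel elements of $\w\cD(s)$. Rewriting the right-hand side of the factorization identity \eqref{fact kern} with the adjoint $\Phi(\l)^\zx$ (and using that $\sL_1\oplus\sL_2$ is a Hilbert space) reads off $\Phi(\l)^\zx\Phi(\m)=D_s(\m,\l)$, so for a generator $h=\Phi(\m)\w f$ one gets $(\cF h)(\l)=\Phi(\l)^\zx\Phi(\m)\w f=D_s(\m,\l)\w f$, i.e. the reproducing kernel element $D_s(\m,\cdot)\w f\in\w\cD(s)$. Hence, for two generators $h=\Phi(\m)\w f$, $g=\Phi(\nu)\w g$, the reproducing property gives $[\cF h,\cF g]_{\w\cD(s)}=\w g^*D_s(\m,\nu)\w f$, while \eqref{fact kern} at $\l=\nu$ gives $[h,g]_\cH=[\Phi(\m)\w f,\Phi(\nu)\w g]_\cH=\w g^*D_s(\m,\nu)\w f$. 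So $[\cF h,\cF g]_{\w\cD(s)}=[h,g]_\cH$ on the linear span of the generators, which by \eqref{span1} is dense in $\cH_\D$ and whose image is the dense span of kernel elements in $\w\cD(s)$; extending by linearity yields \eqref{skalarPr} and the isometry of $\cF$.

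It remains to show that $\cF$ is a bijection onto $\w\cD(s)$, and this is where the \emph{regularity} of $\cH_\D$ is essential. Injectivity is immediate: $\cF h=0$ means $\Phi(\l)^\zx h=0$ for all admissible $\l$, i.e. $h$ is $[\cdot,\cdot]_\cH$-orthogonal to $\cH_\D=\ov{\span}\bigcup_\l\ran\Phi(\l)$; since $\cH_\D$ is regular, hence non-degenerate, $h\in\cH_\D\cap\cH_\D^{[\p]}=\{0\}$. For surjectivity one must argue in the Pontryagin category: regularity makes $\cH_\D$ a Pontryagin space in its own right, and \eqref{fact kern} together with the remark following it identifies its negative index with $\sq_-D_s=\ind_-\w\cD(s)$. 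An isometry between Pontryagin spaces of equal finite negative index whose range contains the spanning kernel elements is automatically onto, bounded, and boundedly invertible, which gives the unitarity of $\cF$ and with it the asserted identification. I expect this last step to be the main obstacle: in the indefinite setting a densely defined isometry is neither automatically bounded nor of full range, so the proof genuinely rests on the non-degeneracy supplied by regularity of $\cH_\D$ and on the matching of negative indices $\k$, which are the technical points to be pinned down (the boundedness of the projection $P_{\cH_\D}$ afforded by regularity being used throughout).
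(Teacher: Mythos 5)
Your argument is correct and follows essentially the same route as the paper: both rest on the factorization $D_s(\mu,\lambda)=\cF(\lambda)\cF(\mu)^{[*]}$ extracted from \eqref{fact kern} and on the resulting reproducing property \eqref{RepF} for the images of the generators of $\cH_\Delta$. The only difference is one of explicitness --- you justify surjectivity and boundedness of $\cF$ via the equality of negative indices and the extension theorem for densely defined isometries between Pontryagin spaces, whereas the paper leaves these points implicit by transporting the inner product along $\cF$ and appealing to the uniqueness of the reproducing kernel Pontryagin space with kernel $D_s$.
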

\begin{proof}
As was mentioned above the kernel $D_s(\m,\l)$ has a finite number
of negative squares. In view of \eqref{fact kern} the equality
\eqref{Ds} can be rewritten in the form
\begin{equation}\label{DFF}
D_s(\m,\l)=\cF(\l)\cF(\m)^\zx\ (\l,\m\in\Omega_s).
\end{equation}
Hence the function $D_s(\m,\cdot)x$ belongs to $\cD$ for every
$x\in\sL_1\oplus\sL_2$. The mapping $\cF:\cH_\D\to\cD$ is
one-to-one, since $\cF h(\l)\equiv0$ implies $h\p\cH_\D$ due to
\eqref{skalarPr}, and, therefore, $h=0$. Moreover, it follows from
\eqref{skalarPr} and \eqref{DFF} that $\cD$ is isometrically
isomorphic to the space $\cH_\D$ and the following reproducing
property of the kernel $D_s(\m,\l)$ holds:
\begin{equation}\label{RepF}
[\cF h,D_s(\m,\cdot)x]_{\cD(s)}=[h,\cF(\m)^\zx
x]_{\cH_\D}=(\cF(\m)h,x)_{\sL_1\oplus\sL_2}
\end{equation}
for every $x\in\sL_1\oplus\sL_2$, $\m\in\Omega_s$.
\end{proof}
If the colligation $\D$ is simple, then the space $\cD$ is
isometrically isomorphic to the space $\cH$ under the mapping $\cF$.
If the colligation $\D$ is not simple but $\cH_\D$ is regular, then
the operator $\cF$ can be continued by zero to the subspace
$\cH\ominus\cH_\D$ and the continuation $\cF$ is given by the same
formula \eqref{Fur'e} for every $h\in\cH$. The operator $\cF$ is
called the {\it Fourier representation} of the colligation $\D$.

\begin{proposition}\label{Prop:FT}
(see \cite{KKhYu87} for the case $\k=0$) The Fourier representation
$\cF$ satisfies the relation
 \begin{equation}\label{E:FT}
  \cF P_\cH U^\zx+
  \begin{bmatrix}
   s(t)\\
   -I_{\sL_2}
  \end{bmatrix}
  P_{\sL_2} U^\zx=t\cdot\cF P_\cH+
  \begin{bmatrix}
   I_{\sL_1}\\
   -s(t)^*
  \end{bmatrix}
  P_{\sL_1}.
 \end{equation}
 Here $P_\cH$ and $P_{\sL_i}$ are orthoprojections onto $\cH$ and $\sL_i\ (i=1,2)$,
 respectively, $t\in\mathbb{T}$.
\end{proposition}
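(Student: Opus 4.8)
The plan is to read \eqref{E:FT} as an identity of operators acting on $\cH\oplus\sL_1$ and to verify it on the two summands $\cH$ and $\sL_1$ separately; since both sides are linear, this suffices. Writing $U=\begin{bmatrix} T & F\\ G & H\end{bmatrix}$ and hence $U^\zx=\begin{bmatrix} T^\zx & G^\zx\\ F^\zx & H^\zx\end{bmatrix}$, I would record the block relations coming from $UU^\zx=I_{\cH\oplus\sL_1}$, namely $TT^\zx+FF^\zx=I_\cH$, $GT^\zx+HF^\zx=0$, $TG^\zx+FH^\zx=0$ and $GG^\zx+HH^\zx=I_{\sL_1}$. The remaining ingredients are the explicit formula $s(t)=H+tG(I-tT)^{-1}F$ from \eqref{ChF}, its adjoint $s(t)^*=H^\zx+\ov t\,F^\zx(I-\ov t\,T^\zx)^{-1}G^\zx$ (obtained by passing the matrix adjoint through $\cH$ via the Pontryagin adjoints), the form of $\cF$ from \eqref{Fur'e}, whose second row is $\ov t\,G_2(t)^\zx=-\ov t\,F^\zx(I-\ov t\,T^\zx)^{-1}$, and the identity $t\ov t=1$ valid on $\dT$.

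For the first summand I evaluate both sides on $h\in\cH$. The left side is $\cF(T^\zx h)+\begin{bmatrix} s(t)\\ -I_{\sL_2}\end{bmatrix}F^\zx h$ and the right side is $t\,\cF h$. In the upper ($\sL_1$-valued) component I substitute $s(t)$, replace $FF^\zx$ by $I_\cH-TT^\zx$ and $HF^\zx$ by $-GT^\zx$, and telescope the two terms carrying $T^\zx h$ through $G(I-tT)^{-1}(I-tT)T^\zx h=GT^\zx h$; everything then cancels to $t\,G(I-tT)^{-1}h$. In the lower ($\sL_2$-valued) component the only facts used are the resolvent identity $\ov t(I-\ov t\,T^\zx)^{-1}T^\zx+I=(I-\ov t\,T^\zx)^{-1}$ together with $t\ov t=1$.

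For the second summand I evaluate on $u\in\sL_1$. Now the left side is $\cF(G^\zx u)+\begin{bmatrix} s(t)\\ -I_{\sL_2}\end{bmatrix}H^\zx u$ and the right side is $\begin{bmatrix} I_{\sL_1}\\ -s(t)^*\end{bmatrix}u$. In the upper component I use $FH^\zx=-TG^\zx$ and $GG^\zx=I_{\sL_1}-HH^\zx$ and again telescope the resolvent, reducing the left side to $u$. In the lower component the required equality $-\ov t\,F^\zx(I-\ov t\,T^\zx)^{-1}G^\zx u-H^\zx u=-s(t)^*u$ is precisely the adjoint formula for $s(t)^*$ recorded above, so it holds by definition.

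The manipulations are routine telescoping once the conventions are fixed, so I expect no conceptual difficulty in the cancellations themselves. The one place that needs genuine care is the lower row: keeping straight the factor $\ov t$ appearing in the second component of $\cF$, and the fact that $s(t)^*$ is the matrix adjoint realized through the Pontryagin adjoints $T^\zx,F^\zx,G^\zx,H^\zx$ on the indefinite space $\cH$ rather than ordinary Hilbert-space adjoints. Checking that these conventions are mutually consistent, so that $t\ov t=1$ on $\dT$ correctly links the analytic upper row with the anti-analytic lower row, is where I would concentrate the verification.
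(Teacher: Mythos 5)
Your verification is correct, and it goes by a genuinely different route than the paper. You decompose $U$ into its blocks $T,F,G,H$, invoke the four relations coming from $UU^\zx=I_{\cH\oplus\sL_1}$, and telescope the resolvents $(I-tT)^{-1}$ and $(I-\ov t\,T^\zx)^{-1}$ separately on the summands $\cH$ and $\sL_1$; I checked the four component computations and they all close up as you describe, including the two points you rightly flag as delicate (the factor $\ov t$ in the second row of $\cF$ from \eqref{Fur'e} --- note the paper's display has a typo there, the correct expression $-\ov\l F^\zx(I-\ov\l T^\zx)^{-1}h$ appears later in the proof of Theorem \ref{sAIP} --- and the identification $s(t)^*=H^\zx+\ov t\,F^\zx(I-\ov t\,T^\zx)^{-1}G^\zx$, which is consistent because the fundamental symmetry $j_\cH$ cancels in the composition $F^\zx(I-\ov t\,T^\zx)^{-1}G^\zx=F^*(I-\ov t\,T^*)^{-1}G^*$). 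The paper instead never opens up the blocks: it uses the global forms \eqref{harfun} and \eqref{G1G2}, namely $s(t)=P_{\sL_1}(I-tUP_\cH)^{-1}UP_{\sL_2}$ and $\cF$ expressed through $(I-tUP_\cH)^{-1}$ and $(I-\ov t\,U^\zx P_\cH)^{-1}$, and collapses both sides of \eqref{E:FT} to the single column $\bigl[\,P_{\sL_1}(I-tUP_\cH)^{-1};\ -P_{\sL_2}(I-\ov t\,U^\zx P_\cH)^{-1}U^\zx\,\bigr]$ using only $UU^\zx=I$ and $P_\cH+P_{\sL_2}=I$ on $\cH\oplus\sL_2$ (respectively $P_\cH+P_{\sL_1}=I$ on $\cH\oplus\sL_1$). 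The paper's argument is shorter and coordinate-free; yours is more elementary in that it needs only the explicit formulas \eqref{ChF} and \eqref{Fur'e} and makes every cancellation visible, at the cost of four separate telescoping computations.
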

\begin{proof}
 Due to \eqref{harfun} and \eqref{G1G2} one can reduce the left-hand side of \eqref{E:FT}
 to the form
 $$
  \begin{bmatrix}
   P_{\sL_1}(I-t UP_\cH)^{-1}U(P_\cH U^\zx+P_{\sL_2}U^\zx)\\
   -\ov t P_{\sL_2}(I-\ov t U^\zx P_\cH)^{-1}U^\zx P_\cH U^\zx-P_{\sL_2}U^\zx
  \end{bmatrix}=
  \begin{bmatrix}
   P_{\sL_1}(I-t UP_\cH)^{-1}\\
   -P_{\sL_2}(I-\ov t U^\zx P_\cH)^{-1}U^\zx
  \end{bmatrix}.
 $$
 Similarly, the right-hand side of \eqref{E:FT} can be rewritten as
 \begin{equation*}
\begin{split}
  &\begin{bmatrix}
   t P_{\sL_1}(I-t UP_\cH)^{-1}P_\cH+P_{\sL_1}\\
   -P_{\sL_2}(I-\ov t U^\zx P_\cH)^{-1}U^\zx P_\cH-P_{\sL_2}(I-\ov t U^\zx P_\cH)^{-1}U^\zx
   P_{\sL_2}
  \end{bmatrix}\\
  &=
  \begin{bmatrix}
   P_{\sL_1}(I-t UP_\cH)^{-1}\\
   -P_{\sL_2}(I-\ov t U^\zx P_\cH)^{-1}U^\zx
  \end{bmatrix}.
  \end{split}
 \end{equation*}
 Now the equality \eqref{E:FT}  follows from two last equalities.
\end{proof}

\begin{definition}
The colligation $\Delta=(\sH,\sF,\sG;T,F,G,H)$ is called the
unitarily equivalent to the colligation
$\Delta^{'}=(\sH',\sF,\sG;T',F',G',H')$ if there exists a mapping
$Z$ from $\sH$ to $\sH'$ such that
\begin{equation}
T'=ZTZ^{-1}, \qquad F'=ZF, \qquad G'=GZ^{-1},
\end{equation}
or in other words
\begin{equation}
\begin{bmatrix}
  Z & 0 \\
  0 & I \\
\end{bmatrix}
\begin{bmatrix}
  T & F \\
  G & H \\
\end{bmatrix}
\begin{bmatrix}
  Z^{-1} & 0 \\
  0 & I \\
\end{bmatrix}=
\begin{bmatrix}
  T' & F' \\
  G' & H' \\
\end{bmatrix}
\end{equation}
\end{definition}
\begin{theorem}\label{fun uz}(see \cite{D2001})
Let $\Delta = (\cH,\sL_2,\sL_1;T,F,G,H)$ be a simple unitary
colligation and $s(\cdot)$ be the characteristic function of the
colligation $\D$, and let the colligation $\Delta_s =
(\cD(s),\sL_2,\sL_1;U_s)= (\cD(s),\sL_2,\sL_1;T_s,F_s,G_s,H_s)$,
where
\begin{equation}\label{Fcol}
 \begin{split}
  T_sf&=\ov t\sk({f-
  \begin{bmatrix}
   I&-s\\
   -s^*&I
  \end{bmatrix}
  \begin{bmatrix}
   f_+(0)\\
   0
  \end{bmatrix}
  }),\quad
  F_s=-\ov t
  \begin{bmatrix}
   I&-s\\
   -s^*&I
  \end{bmatrix}
  \begin{bmatrix}
   s(0)\\
   I_{\sL_2}
  \end{bmatrix},\\
  G_sf&=f_+(0),\quad H_s=s(0).
 \end{split}
\end{equation}

Then the colligations $\D$ and $\D_s$ are unitary equivalent via
\begin{equation}\label{uniteq}
 U_s
 \begin{bmatrix}
  \cF&0\\
  0&I_{\sL_2}
 \end{bmatrix}=
 \begin{bmatrix}
  \cF&0\\
  0&I_{\sL_1}
 \end{bmatrix}
 U
\end{equation}

\end{theorem}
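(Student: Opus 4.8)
The plan is to take the Fourier representation $\cF$ from Proposition \ref{PropFM1} as the intertwining operator. Since $\Delta$ is simple, the remark following Proposition \ref{PropFM1} shows that $\cF:\cH\to\cD(s)$ is an isometric isomorphism, so it is the natural candidate for the unitary equivalence $Z$. Writing $U=\begin{bmatrix} T & F\\ G & H\end{bmatrix}$ and $U_s=\begin{bmatrix} T_s & F_s\\ G_s & H_s\end{bmatrix}$ and multiplying out the block matrices in \eqref{uniteq}, I would first observe that \eqref{uniteq} is equivalent to the four identities
\[
T_s\cF=\cF T,\quad F_s=\cF F,\quad G_s\cF=G,\quad H_s=H.
\]
The theorem then reduces to checking these four relations, since $T_s=\cF T\cF^{-1}$, $F_s=\cF F$, $G_s=G\cF^{-1}$, $H_s=H$ are precisely the conditions in the definition of unitary equivalence with $Z=\cF$.

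The two ``output/feedthrough'' identities are immediate. Evaluating the characteristic function \eqref{ChF} at $\l=0$ gives $s(0)=H$, while $H_s=s(0)$ by \eqref{Fcol}, so $H_s=H$. Likewise the first component of $\cF h$ in \eqref{Fur'e} is $G(I-\l T)^{-1}h$, whose value at $\l=0$ is $Gh$; since $G_sf=f_+(0)$ by \eqref{Fcol}, I obtain $G_s\cF h=(\cF h)_+(0)=Gh$, that is $G_s\cF=G$.

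The core of the argument is the identity \eqref{E:FT} of Proposition \ref{Prop:FT}. My plan is to evaluate \eqref{E:FT} on the vector $U\begin{bmatrix} h'\\\ell_2\end{bmatrix}$, with $\begin{bmatrix} h'\\\ell_2\end{bmatrix}\in\cH\oplus\sL_2$, and to use unitarity of the colligation, $U^\zx U=I$, on the left-hand side. This collapses $P_\cH U^\zx U$ and $P_{\sL_2}U^\zx U$ to plain projections and turns \eqref{E:FT} into the functional identity
\[
\cF h'+\begin{bmatrix} s\\-I_{\sL_2}\end{bmatrix}\ell_2=t\,\cF(Th'+F\ell_2)+\begin{bmatrix} I_{\sL_1}\\-s^*\end{bmatrix}(Gh'+H\ell_2),
\]
valid on $\mathbb{T}$ for all $h'\in\cH$ and $\ell_2\in\sL_2$. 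Setting $\ell_2=0$ and solving for $\cF Th'$ gives $\cF Th'=\ov t\left(\cF h'-\begin{bmatrix} I\\-s^*\end{bmatrix}Gh'\right)$; recognizing the right-hand side as $T_s\cF h'$ through $(\cF h')_+(0)=Gh'$ and the relation $\begin{bmatrix} I&-s\\-s^*&I\end{bmatrix}\begin{bmatrix} Gh'\\0\end{bmatrix}=\begin{bmatrix} I\\-s^*\end{bmatrix}Gh'$ yields $\cF T=T_s\cF$. Setting $h'=0$ and using $H=s(0)$ gives $\cF F\ell_2=\ov t\left(\begin{bmatrix} s\\-I\end{bmatrix}-\begin{bmatrix} I\\-s^*\end{bmatrix}s(0)\right)\ell_2$, which is exactly the formula for $F_s$ in \eqref{Fcol} after expanding $\begin{bmatrix} I&-s\\-s^*&I\end{bmatrix}\begin{bmatrix} s(0)\\I\end{bmatrix}$.

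I expect the main obstacle to be bookkeeping rather than conceptual: one must choose the correct substitution $U\begin{bmatrix} h'\\\ell_2\end{bmatrix}$ so that $U^\zx U=I$ simplifies the left side and exposes $T_s$ and $F_s$ directly, and one must treat the two block rows consistently as $\sL_1\oplus\sL_2$-valued functions on $\mathbb{T}$, keeping track of boundary values versus the holomorphic expressions $(I-\l T)^{-1}$ that are defined only for $1/\l\in\r(T)$. The final identification of $\cF F$ with $F_s$ is then a routine $2\times2$ matrix computation once $H=s(0)$ is available.
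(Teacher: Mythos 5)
Your proposal is correct and follows essentially the same route as the paper: the paper likewise takes $Z=\cF$, composes the identity \eqref{E:FT} with $U$ on the right (using $U^\zx U=I$) to obtain the operator identity \eqref{E:F1}, specializes to $\ell_2=0$ and $h'=0$ to get \eqref{E:F2}--\eqref{E:F3}, and reads off $T_s,F_s,G_s,H_s$ exactly as you do. No substantive difference.
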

\begin{proof}
The equality \eqref{E:FT} can be rewritten in the form
\begin{equation}\label{E:F1}
 \cF P_\cH+
 \begin{bmatrix}
  s\\
  -I_{\sL_2}
 \end{bmatrix}
 P_{\sL_2}=t\cdot \cF P_\cH U+
 \begin{bmatrix}
  I_{\sL_1}\\
  -s^*
 \end{bmatrix}
 P_{\sL_2}
\end{equation}
Hence one obtains for every $h\in\cH$ and $x\in\sL_2$
\begin{equation}\label{E:F2}
 \cF h=t\cdot \cF Th+
 \begin{bmatrix}
  I_{\sL_1}\\
  -s^*
 \end{bmatrix}
 Gh \quad (h\in\cH),
\end{equation}
\begin{equation}\label{E:F3}
 \begin{bmatrix}
  s\\
  -I_{\sL_2}
 \end{bmatrix}
 x=t\cdot \cF Fx+
 \begin{bmatrix}
  I_{\sL_1}\\
  -s^*
 \end{bmatrix}
 Hx \quad (x\in\sL_2).
\end{equation}
Let the operator $U_s=
\begin{bmatrix}
 T_s&F_s\\
 G_s&H_s
\end{bmatrix} $
be defined by the equality
$$
 U_s=
 \begin{bmatrix}
  \cF&0\\
  0&I_{\sL_1}
 \end{bmatrix}
 U
 \begin{bmatrix}
  \cF&0\\
  0&I_{\sL_2}
 \end{bmatrix}^{-1}
 =
 \begin{bmatrix}
  \cF T\cF^{-1}&\cF F\\
  G\cF^{-1}&H
 \end{bmatrix}.
$$
Setting $f=\cF h$, one obtains from \eqref{E:F2}, \eqref{E:F3} and
\eqref{ChF}
$$
T_sf=\cF T\cF^{-1}f=\ov t\sk({f-
\begin{bmatrix}
 I_{\sL_1}\\
 -s^*
\end{bmatrix}
G_s f })= \ov t\sk({f-
\begin{bmatrix}
 I_{\sL_1}\\
 -s^*
\end{bmatrix}
f_+(0) }),
$$
$$
F_sx=\cF Fx=\ov t\sk({
\begin{bmatrix}
 s\\
 -I_{\sL_2}
\end{bmatrix}
x-
\begin{bmatrix}
 I_{\sL_1}\\
 -s^*
\end{bmatrix}
H })= -\ov t
\begin{bmatrix}
 I_{\sL_1}&-s\\
 -s^*&I_{\sL_2}
\end{bmatrix}
\begin{bmatrix}
 s(0)x\\
 x
\end{bmatrix}
,
$$
$$
G_s f=G\cF^{-1}f=f_+(0),\quad H_s=H=s(0),
$$
which prove the formula \eqref{Fcol}.
\end{proof}

Theorem ~\eqref{fun uz} shows why the mapping $\cF$ is called the
Fourier representation of $\Delta$. In the case when the colligation
 $\Delta$ is simple, this mapping gives the unitary equivalence between $\Delta$ and its functional model.


\section{Abstract interpolation problem $AIP(\widetilde\k)$.}

 Given are  Hilbert spaces $\cH$, $\sL_1$, $\sL_2$, integer $\k,
\widetilde\k\in \mathbb{Z}_+$ and operators
$M,N\in\mathcal{B}(\cH)$, $C_1\in\mathcal{B}(\cH,\sL_1)$,
 $C_2\in\mathcal{B}(\cH,\sL_2)$, $P\in\mathcal{B}(\cH)$, such that

 (A1) $P=P^*$, $0\in\r(P)$ и $\sq_-(P)=\k$.

(A2) for every $f,g\in\cH$ the following identity holds
\begin{equation}\label{LSt}
(PMf,Mg)_\cH-(PNf,Ng)_\cH=(C_1f,C_1g)_{\sL_1}-(C_2f,C_2g)_{\sL_2}
\end{equation}

Find an operator function $s(\cdot)\in
S_{\widetilde\k}(\sL_2,\sL_1)$ and the mapping $\Phi:\cH\to\cD(s)$,
such that:
\begin{itemize}
\item[(i)]
$[\Phi h,\Phi h]_{\cD(s)}\leq(Ph,h)_\cH,\quad \text{ for every
}h\in\cH; $
\item[(ii)]
$\Phi Mh-t\Phi Nh=
\begin{bmatrix}
I&-s \\
-s^*&I
\end{bmatrix}
Ch$, where $C=\begin{bmatrix}
 C_1\\
C_2
\end{bmatrix}$,
$t\in\dT$, for every $h\in\cH$.
\end{itemize}

 Alongside with the Problem $AIP(\w\k)$
let us consider the Problem $AIP_0(\w\k)$, by replacing $(i)$ by the
generalized Parseval equality
\begin{itemize}
\item[(ii')]
$[\Phi h,\Phi h]_{\cD(s)}=(Ph,h)_\cH$, for every $h\in\cH$.
\end{itemize}
Let $\cH$ be supplied with the inner product
$[\cdot,\cdot]_\cH:=(P\cdot,\cdot)_\cH$. Then the space
$(\cH,[\cdot,\cdot]_\cH)$ is a Pontryagin space with the negative
index $\k$.

It follows from the identity \eqref{LSt} that the operator
\begin{equation}\label{E:V}
 V:\begin{bmatrix}
 Mf\\
C_2f
\end{bmatrix}\to
\begin{bmatrix}
Nf \\
C_1f
\end{bmatrix},
\end{equation}
 is a Pontryagin space isometric operator from $\cH\oplus\sL_2$ to $\cH\oplus\sL_1$.
 The problem $AIP(\w\k)$ can be reduced to the problem of extension of the isometric operator $V$
  to a unitary operator
$$
U=
\begin{bmatrix}
 T&F\\
G&H
\end{bmatrix}
:\begin{bmatrix}
 \w\cH\\
\sL_2
\end{bmatrix}\to
\begin{bmatrix}
 \w\cH\\
\sL_1
\end{bmatrix},\quad(\w\cH\supset\cH).
$$
\begin{definition}\label{Def:reg}
A unitary extension $U$ of $V$ will be called
$(\sL_2,\sL_1)$-regular, if $\w\cH\ominus\cH_\Delta$ is a Hilbert
space. An extension $U$ will be called $(\sL_2,\sL_1)$-minimal, if
the corresponding colligation $\Delta=(\w\cH,\sL_2,\sL_1, U)$ is
 simple, or in other words $\cH_\D=\w\cH$. Clearly, every $(\sL_2,\sL_1)$-minimal unitary extension $U$ is $(\sL_2,\sL_1)$-regular.

\end{definition}

In the case $\w\k=0$ a description of the set of solutions of the
Problem $AIP(\w\k)$ was given in \cite{KKhYu87}, \cite{KhYu94}.
\begin{lemma}\label{L:reg}
Let  $U$ be a unitary operator in a Pontryagin space $\w\cH$ with
the negative index $\w\k$ and $
s(\l)=P_{\sL_1}(I-zUP_{\w\cH})^{-1}UP_{\sL_2}$ be its characteristic
function. Then $s(\cdot)$ belongs $S_{\w\k}(\sL_2,\sL_1)$ if and
only if the operator $U$ is a $(\sL_2,\sL_1)$-regular.
\end{lemma}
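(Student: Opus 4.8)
The plan is to reduce everything to the simple part of the colligation carried by $\cH_\Delta$. Since $\ran F\subseteq\cH_\Delta$, $\ran G^{\zx}\subseteq\cH_\Delta$ and $\cH_\Delta$ is invariant under both $T$ and $T^{\zx}$, formula \eqref{ChF} shows that $s(\cdot)$ is unchanged if one replaces $\Delta$ by the colligation $\Delta_0=(\cH_\Delta,\sL_2,\sL_1;T|_{\cH_\Delta},F,G|_{\cH_\Delta},H)$, which is simple by the very definition \eqref{span1} of $\cH_\Delta$. I would first record, using the factorization \eqref{fact kern}, that the kernel $D_s(\m,\l)$ is the Gram kernel of the family $\{G_1(\l)h_1+\l G_2(\l)h_2\}$, whose closed linear span is exactly $\cH_\Delta$; hence $\sq_-D_s$ equals the negative index $\k_0$ of $\cH_\Delta$ taken with the (possibly degenerate) inner product inherited from $\w\cH$. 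Combining Theorem \ref{hatD} with \cite{Sch64} gives $\sq_-D_s=\ind_-\w\cD(s)=\ind_-\cD(s)$, and since $\cD(s)$ has negative index equal to the index of $s$, we conclude that $s\in S_{\w\k}(\sL_2,\sL_1)$ if and only if $\k_0=\w\k$. The whole lemma thus amounts to showing that $\k_0=\w\k$ if and only if $\cH_\Delta^{[\perp]}=\w\cH\ominus\cH_\Delta$ is a Hilbert space.

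For the implication ``$U$ regular $\Rightarrow s\in S_{\w\k}$'' I would argue geometrically. If $\cH_\Delta^{[\perp]}$ is positive definite, then it is a regular (orthocomplemented) subspace, so $\w\cH$ splits as the $[\cdot,\cdot]$-orthogonal sum of $\cH_\Delta^{[\perp]}$ and its orthogonal companion $(\cH_\Delta^{[\perp]})^{[\perp]}=\cH_\Delta$, the last equality holding because $\cH_\Delta$ is closed. Consequently $\cH_\Delta$ is itself regular and $\ind_-\cH_\Delta=\w\k-\ind_-\cH_\Delta^{[\perp]}=\w\k$, so $\cH_\Delta$ is a Pontryagin space of negative index $\w\k$. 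Then $\Delta_0$ is a simple unitary colligation on a Pontryagin state space of index $\w\k$, and the remark following \eqref{ChF} yields $s\in S_{\w\k}(\sL_2,\sL_1)$.

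For the converse I would start from $\k_0=\sq_-D_s=\w\k$ and deduce that $\cH_\Delta$ must be regular with $\cH_\Delta^{[\perp]}$ a Hilbert space. The key point, and the step I expect to be the main obstacle, is the index bookkeeping when $\cH_\Delta$ is allowed to be degenerate: if the isotropic part $\cH_\Delta\cap\cH_\Delta^{[\perp]}$ has dimension $d\ge1$, one pairs each isotropic direction with a dual vector to build $d$ hyperbolic planes, which together with a maximal negative subspace of $\cH_\Delta$ produce a negative definite subspace of $\w\cH$ of dimension $\k_0+d$; hence $\k_0\le\w\k-d<\w\k$, contradicting $\k_0=\w\k$. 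Thus $d=0$, $\cH_\Delta$ is regular, and the orthogonal decomposition gives $\ind_-\cH_\Delta^{[\perp]}=\w\k-\k_0=0$; being regular and positive definite, $\cH_\Delta^{[\perp]}$ is a Hilbert space, i.e. $U$ is $(\sL_2,\sL_1)$-regular. The only routine verifications left are the reducing property of $\cH_\Delta$ and the elementary fact that the number of negative squares of a Gram kernel of a total family equals the negative index of its closed span.
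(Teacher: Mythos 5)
Your proposal is correct and follows essentially the same route as the paper: both reduce the statement to the identity $\ind_-\cH_\Delta=\sq_-D_s=\ind_-\cD(s)$ via the kernel factorization \eqref{fact kern} (equivalently, the isometric Fourier map of Proposition \ref{PropFM1}), and then translate $\ind_-\cH_\Delta=\w\k$ into positivity of $\w\cH\ominus\cH_\Delta$. The only difference is that you spell out the index bookkeeping for a possibly degenerate $\cH_\Delta$ (the estimate $\k_0+d\le\w\k$ forcing the isotropic part to vanish), a step the paper's proof leaves implicit.
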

\begin{proof}
If $U$ is a $(\sL_2,\sL_1)$-regular, then $\ind_-(\cH_\Delta)=\w\k$.
The mapping
\begin{equation}
\cF(\l):h\to
\begin{bmatrix}
 P_{\sL_1}(I-\l UP_{\w\cH})^{-1}UP_\cH\\
-\ov\l P_{\sL_2}(I-\ov\l U^\zx P_{\w\cH})^{-1}U^\zx P_\cH
\end{bmatrix}h
\end{equation}
is isometric from $\cH_\Delta$ to $\cD(s)$. Hence,
$\ind_-(\cD(s))=\w\k$ and $s(\cdot)\in S_{\w\k}(\sL_2,\sL_1)$.

Conversely, if  $s\in S_{\w\k}$, then since $\ind_-(\cD(s))=\w\k$
 one gets $\ind_-(\cH_\Delta)=\w\k$. Hence, $U$ is a
$(\sL_2,\sL_1)$-regular.
\end{proof}
%

\begin{theorem}\label{sAIP}
For the Problem $AIP(\w\k)$ to be solvable it is necessary that
$\k\leq\w\k\in \mathbb{Z}_+$. The formulas
\begin{equation}\label{A7}
s(\l)=P_{\sL_1}(I-\l UP_{\w\cH})^{-1}UP_{\sL_2}
\end{equation}
\begin{equation}\label{A8}
\Phi(\l)=
\begin{bmatrix}
 P_{\sL_1}(I-\l UP_{\w\cH})^{-1}UP_\cH\\
-\ov\l P_{\sL_2}(I-\ov\l U^\zx P_{\w\cH})^{-1}U^\zx P_\cH
\end{bmatrix}
\end{equation}
establish a one-to-one correspondence between the set of solutions
$\{s,\Phi\}$ of the Problem $AIP(\w\k)$ and the set of all
$(\sL_2,\sL_1)$-regular unitary operator extensions $U$ of the
operator $V$, such that:
\begin{equation}\label{A9}
ind_-(\w\cH)=\w\k.
\end{equation}
A solution $\{s,\Phi\}$ of the Problem $AIP(\w\k)$ is a solution of
the Problem $AIP_0(\w\k)$, if and only if the extension $U$ is
$(\sL_2,\sL_1)$-minimal.
\end{theorem}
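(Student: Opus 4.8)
The plan is to establish the claimed one-to-one correspondence in three stages: first show that any solution $\{s,\Phi\}$ of $AIP(\w\k)$ gives rise to a $(\sL_2,\sL_1)$-regular unitary extension $U$ of $V$ with $\ind_-(\w\cH)=\w\k$; second, show conversely that any such extension produces a solution via the formulas \eqref{A7}--\eqref{A8}; and third, verify that these two constructions are mutually inverse, so the correspondence is genuinely bijective. The necessity of $\k\le\w\k$ should fall out immediately: since $\cH\subset\w\cH$ as Pontryagin spaces and $\ind_-\cH=\k$ by assumption (A1), the monotonicity of the negative index under isometric inclusion forces $\k=\ind_-\cH\le\ind_-\w\cH=\w\k$.

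For the forward direction, I would start from a solution $\{s,\Phi\}$ and realize $s$ as the characteristic function of a simple unitary colligation $\Delta=(\w\cH,\sL_2,\sL_1;U)$ with $\cD(s)$ (equivalently $\w\cD(s)$) as state space, using Theorem \ref{fun uz} and the functional model machinery of Section 3. The defining property (ii) of $AIP(\w\k)$, namely $\Phi Mh-t\Phi Nh=\begin{bmatrix} I&-s\\ -s^*&I\end{bmatrix}Ch$, should be recognized as exactly the intertwining relation \eqref{E:FT} satisfied by the Fourier representation $\cF$; comparing these identifies $\Phi$ with $\cF$ restricted to $\cH$ and lets me read off that $U$ extends $V$ on the relevant graph. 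The contractivity inequality (i), $[\Phi h,\Phi h]_{\cD(s)}\le(Ph,h)_\cH$, together with \eqref{skalarPr}, is what guarantees the embedding $\cH\hookrightarrow\w\cH$ is isometric onto a subspace whose orthogonal complement $\w\cH\ominus\cH_\Delta$ carries a nonnegative form, i.e.\ is a Hilbert space — this is precisely $(\sL_2,\sL_1)$-regularity. By Lemma \ref{L:reg}, $s\in S_{\w\k}$ is then equivalent to regularity of $U$, pinning down $\ind_-(\w\cH)=\w\k$.

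For the converse, given a $(\sL_2,\sL_1)$-regular unitary extension $U$ with $\ind_-(\w\cH)=\w\k$, I would define $s$ and $\Phi$ by \eqref{A7}--\eqref{A8} and verify they solve the problem. Lemma \ref{L:reg} gives $s\in S_{\w\k}(\sL_2,\sL_1)$. The map $\Phi$ in \eqref{A8} is exactly the Fourier representation $\cF$ of Proposition \ref{PropFM1} restricted to $\cH$, so by \eqref{skalarPr} it is isometric on $\cH_\Delta$ and annihilates $\cH\ominus\cH_\Delta$; regularity makes that complement a Hilbert space, yielding the inequality (i). Proposition \ref{Prop:FT} supplies relation \eqref{E:FT}, from which (ii) follows after substituting the definition \eqref{E:V} of $V$ and using that $U$ extends $V$. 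The final bijectivity claim reduces to noting that both assignments invert each other up to the unitary equivalence of Theorem \ref{fun uz}, and that the $AIP_0(\w\k)$ refinement — where (i) becomes the Parseval equality (ii$'$) — holds exactly when $\Phi$ is isometric with no kernel, i.e.\ when $\cH_\Delta=\w\cH$, which is the definition of $(\sL_2,\sL_1)$-minimality.

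The main obstacle I anticipate is the forward direction's reconstruction of a \emph{single} extension $U$ from the pair $\{s,\Phi\}$ and the verification that the correspondence is well-defined rather than multivalued: a priori $s$ determines the colligation only up to unitary equivalence, so I must check that the extra datum $\Phi$ rigidifies the construction so that distinct solutions yield distinct extensions and vice versa. Concretely, the delicate point is showing that the subspace of $\w\cH$ identified with $\cH$ via $\Phi$ is determined unambiguously and that $U$ genuinely restricts to $V$ on $\{[Mf,C_2f]^\top\}$; this requires carefully tracking how the inequality in (i) (as opposed to equality) allows a nontrivial Hilbert-space complement while still fixing the action of $U$ on the range of $V$. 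Handling the strict-inequality case — where the extension is regular but not minimal — is where the argument is least routine, since there one must produce the auxiliary Hilbert space factor explicitly and confirm it does not disturb the index count $\ind_-(\w\cH)=\w\k$.
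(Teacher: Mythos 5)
Your skeleton is right, and your ``extension $\Rightarrow$ solution'' half is essentially the paper's part 1): identify $\Phi$ with the Fourier representation $\cF$ of Proposition \ref{PropFM1} restricted to $\cH$, get (i) from $[\cF h,\cF h]_{\cD(s)}=[P_\Delta h,P_\Delta h]_{\w\cH}\le[h,h]_{\w\cH}$ using that $\w\cH\ominus\cH_\Delta$ is a Hilbert space, and get (ii) from \eqref{E:FT} by inserting $U^\zx\bigl[\begin{smallmatrix}Nh\\ C_1h\end{smallmatrix}\bigr]=\bigl[\begin{smallmatrix}Mh\\ C_2h\end{smallmatrix}\bigr]$. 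The minimality/Parseval part also matches.

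The genuine gap is in the ``solution $\Rightarrow$ extension'' half. You correctly name the obstacle --- when (i) is a strict inequality, $\Phi$ is only a Pontryagin-space contraction, so $U_s$ alone does not extend $V$ and ``one must produce the auxiliary Hilbert space factor explicitly'' --- but you never produce it, and this construction is the substantive content of this direction. The paper's resolution: since (i) says $I-\Phi^\zx\Phi\ge 0$ in $(\cH,(P\cdot,\cdot)_\cH)$, the Bognar--Kramli factorization gives $I-\Phi^\zx\Phi=DD^\zx$ with $D$ acting from the \emph{Hilbert} space $\cD=\cran(I-\Phi^\zx\Phi)$ into $\cH$; the lifting $\w\Phi=\bigl[\begin{smallmatrix}\Phi\\ D^\zx\end{smallmatrix}\bigr]:\cH\to\cD(s)\oplus\cD$ is then isometric; a computation combining the isometry of $U_s$, $\w\Phi$ and $V$ with the identity \eqref{LSt} shows that $U_D:D^\zx Mh\mapsto D^\zx Nh$ is an isometry of $\cD$, which one extends to a unitary $\w U_D$ on a Hilbert space $\w\cD\supset\cD$; finally $U=U_s\oplus\w U_D$ on $\w\cH=\cD(s)\oplus\w\cD$ is the required unitary extension of (the copy of) $V$, and $\w\cH\ominus\cD(s)=\w\cD$ being a Hilbert space yields both the $(\sL_2,\sL_1)$-regularity and $\ind_-\w\cH=\ind_-\cD(s)=\w\k$. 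Note also that your one-line proof of the necessity $\k\le\w\k$ quietly assumes that $\cH$ sits isometrically inside $\w\cH$; starting from a solution, that embedding is exactly the isometric lifting $\w\Phi$, so the necessity claim also rests on the missing construction rather than preceding it.
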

\begin{proof}
1) Let $U$ be a $(\sL_2,\sL_1)$-regular extension of the operator
$V$, satisfying ~\eqref{A9} and let $P_{\Delta}$ be the orthogonal
projection onto $\cH_{\Delta}$ in $\w \cH$. Consider a unitary
colligation
$$\Delta=(\w\cH,\sL_2,\sL_1;U),\quad U=\begin{bmatrix}
T&F\\
G&H
\end{bmatrix}.$$
Due to Proposition \ref{PropFM1} the space $\cD(s)$ can be
interpreted as the set of vector function
$$
(\cF h)(\l)=\begin{bmatrix}
 G(I-\l T)^{-1}h \\
 -\ov\l F^\zx (I-\ov\l T^\zx)^{-1}h
\end{bmatrix},\quad h\in\w\cH
$$
with the scalar product
$$
[\cF h,\cF g]_{\cD(s)}=[P_\Delta h,P_\Delta g]_{\w\cH}.
$$
Since $\w\cH\ominus\cH_\Delta$ is a Hilbert space for every
$h\in\w\cH$ the following inequality holds
$$
[\cF h,\cF h]_{\cD(s)}=[P_\Delta h,P_\Delta
h]_{\w\cH}\leq[h,h]_{\w\cH}.
$$
Setting $\Phi h=\cF h$ for $h\in\cH$, one obtains the mapping
$\Phi:h\to\cD(s)$, which satisfies (i).

 The equality (ii)
is implied by relation \eqref{E:FT}
\begin{equation}\label{A12}
\cF P_{\w\cH}U^\zx+\begin{bmatrix}
 s\\
-I_{\sL_2}
\end{bmatrix}P_{\sL_2}U^\zx=t\cF P_{\w\cH}+\begin{bmatrix}
 I_{\sL_1}\\
 -s^*
\end{bmatrix}P_{\sL_1}
\end{equation}
For a vector $\begin{bmatrix}
 Mh\\
C_2h
\end{bmatrix}\quad (h\in\cH)$
from $\dom V$ one has \eqref{E:V}
\begin{equation}\label{A13}
U^\zx\begin{bmatrix}
 Nh\\
C_1h
\end{bmatrix}
=
\begin{bmatrix}
 Mh\\
C_2h
\end{bmatrix}\quad (h\in\cH)
\end{equation}
Substituting ~\eqref{A13} into ~\eqref{A12} and taking account of
$\cF Mh=\Phi Mh$, $\cF Nh=\Phi Nh$, one obtains the equality
$$
\Phi Mh+\begin{bmatrix}
 s\\
-I_{\sL_2}
\end{bmatrix}C_2h=t\Phi N h+\begin{bmatrix}
 I_{\sL_1}\\
- s^*
\end{bmatrix}C_1 h,
$$
which is equivalent (ii).

2) Conversely, let $\{s,\Phi\}$ be a solution of the Problem
$AIP(\w\k)$ and let $\Delta_s=(\cD(s),\sL_2,\sL_1;U_s)$ be a unitary
colligation with the characteristic function $s(\cdot)$, built in
Proposition ~\ref{fun uz}. Since the operator $I-\Phi^\zx
\Phi:\cH\to\cH$ is nonnegative in the Pontryagin space $\cH$, it
admits a Bognar-Kramli factorization \cite{Bog74}: $I-\Phi^\zx
\Phi=DD^\zx$, where the defect operator $D$ acts from the Hilbert
space $\cD=\ov\ran(I-\Phi^\zx\Phi)$ to the Pontryagin space $\cH$.
 Let us construct a lifting  $\w\Phi:\cH\to\cD(s)\oplus\cD=:\w\cH$
 of the mapping
$\Phi:\cH\to\cD(s)$, setting $\w\Phi:=\begin{bmatrix}
\Phi \\
D^\zx
\end{bmatrix}$. Then for every $h,g\in\cH$ one obtains the equality:
$$
[\w\Phi h,\w\Phi g]_{\w\cH}=[\Phi f,\Phi g]_{\cD(s)}+[D^\zx h,D^\zx
g]_\cD=(P h,g)_\cH,
$$
which proves that the mapping $\w\Phi$ is isometric. Further it
follows from ~\eqref{uniteq} that
\begin{equation}\label{A14}
U_s
\begin{bmatrix}
\Phi M h \\
C_2h
\end{bmatrix}
=\begin{bmatrix}
 \cF&0\\
 0&I_{\sL_1}
\end{bmatrix}
U
\begin{bmatrix}
 \cF^{-1}&0\\
 0&I_{\sL_2}
\end{bmatrix}
\begin{bmatrix}
\Phi M h \\
C_2h
\end{bmatrix}
 =\begin{bmatrix}
\Phi N h \\
C_1h
\end{bmatrix},\quad
h\in\cH
\end{equation}
Since the operators $U_s$, $\w\Phi$ and $V$ are isometric, one
obtains for every $h\in\cH$:
\begin{multline*}
(D^\zx Nh,D^\zx Nh)_\cH=[\w\Phi Nh,\w\Phi Nh]_{\w\cH}-[\Phi Nh,\Phi
Nh]_{\cD(s)}\\
=[\w\Phi Nh,\w\Phi Nh]_{\w\cH}-[\Phi Mh,\Phi
Mh]_{\cD(s)}-(C_2h,C_2h)_{\sL_2}+(C_1h,C_1h)_{\sL_1}\\
=[(PNh,Nh)_\cH+\|C_1h\|^2_{\sL_1}]-[(PMh,Mh)_\cH+\|C_2h\|^2_{\sL_2}]\\
+[D^\zx Mh,D^\zx Mh]_\cH=[D^\zx Mh,D^\zx Mh]_\cH.
\end{multline*}
Thus the operator
\begin{equation}\label{A15}
U_D:D^\zx Mh\to D^\zx Nh,\quad h\in\cH
\end{equation}
is isometric. Let $\w U_D$ be a unitary extension of $U_D$ in a
Hilbert space $\w\cD\supset\cD$. Then
\begin{equation*}
U=U_s\oplus\w U_D:\begin{bmatrix}
\w\cH \\
\sL_2
\end{bmatrix}
\to
\begin{bmatrix}
\w\cH \\
\sL_1
\end{bmatrix},\quad\w\cH=\cD(s)\oplus\w\cD
\end{equation*}
is a unitary operator. It follows from \eqref{A14} and \eqref{A15}
that
\begin{equation*}
U\begin{bmatrix}
 \w\Phi Mh\\
C_2h
\end{bmatrix}=
\begin{bmatrix}
 \w\Phi Nh\\
C_1h
\end{bmatrix},\quad h\in\cH
\end{equation*}
and the operator $U$ is a unitary extension of the isometric
operator
\begin{equation*}
\w V=\begin{bmatrix}
 \w\Phi&0\\
0&I
\end{bmatrix}V
\begin{bmatrix}
 \w\Phi&0\\
0&I
\end{bmatrix}^{-1}
\end{equation*}
Hence, $\k\leq\w\k$ and $U$ is an $(\sL_2,\sL_1)$-regular extension
of the operator $\w V$ satistying \eqref{A9}, since
${\w\cH\ominus\cD(s)=\w\cD}$ is a Hilbert space.

3) If the extension $U$ is $(\sL_2,\sL_1)$-minimal, then
$\cH_\Delta=\w\cH$ and the mapping $\cF:\w\cH\to\cD(s)$ is
isometric. It proves the Parseval equality (ii'). Conversely, if
$\{s,\Phi\}$ is a solution of the Problem $AIP_0(\w\k)$, then the
mapping $\Phi$ is isometric and the operator $U_s$ is a unitary
extension of the operator $\w V=\begin{bmatrix}
 \Phi&0\\
0&I
\end{bmatrix}V
\begin{bmatrix}
 \Phi&0\\
0&I
\end{bmatrix}^{-1}$. Since the colligation $\Delta_s$ is simple, the extension $U$ is $(\sL_2,\sL_1)$-minimal.
 \end{proof}

\section{Parametrization of solutions.}

\begin{definition}
Recall that $\l$ is a regular point of the pencil $M-\l N$ (it is
denoted by $\l\in\r(M,N)$ ) if $0\in\r(M-\l N)$. Denote
$$
\r(M,N)^{\#}:=\sk\{{\l:0\in\r\sk({M-\frac{1}{\ov\l} N})^*}\}
$$

\end{definition}
Suppose that the Problem $AIP(\w\k)$ data satisfy the condition


(A3) There exists a point $a\in\mathbb{T}\bigcap\r(M,N)$.

We will suppose that $\r(M,N)\supset\mathbb{D}$ except finite set of
points.

 Recall the following definition from the paper \cite{B2}.
\begin{definition}
We shall write $\l\in\r( V,\sL_2)$ if $1\in\wh \r(\l P_\cH V)$ and
\begin{equation}\label{lreg}
(I-\l P_{\cH} V)\dom V\dotplus\begin{bmatrix}
 0\\
\sL_2 \end{bmatrix}=\begin{bmatrix}
 \cH\\
\sL_2
\end{bmatrix},
\end{equation}
and $\l\in\r( V^{-1},\sL_1)$ if $1\in\wh \r(\l P_\cH V^{-1})$ and
\begin{equation}\label{lreg2}
(I-\l P_{\cH} V^{-1})\ran V\dotplus\begin{bmatrix}
 0\\
\sL_1
\end{bmatrix}=\begin{bmatrix}
 \cH\\
\sL_1
\end{bmatrix}.
\end{equation}
We shall write $\l\in\r_{V}(\sL_2,\sL_1)$ if $\l\in\r( V,\sL_2)$,
and $\overline{\l}\in\r( V^{-1},\sL_1)$.
\end{definition}

\begin{proposition}
The following equivalences hold

1) $\l\in\r(V,\sL_2)$ iff $\l\in\r(M,N)$;

2) $\ov\l\in\r(V^{-1},\sL_1)$ iff $\ov\l\in\r(N,M)$;

3) $\l\in\r_V(\sL_2,\sL_1)$ iff $\l\in\r(M,N)\bigcap\r(N,M)^*$.
\end{proposition}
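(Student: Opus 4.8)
The plan is to unwind each clause in the definitions of $\r(V,\sL_2)$ and $\r(V^{-1},\sL_1)$ into a statement about the pencils $M-\l N$ and $N-\ov\l M$, using the explicit form~\eqref{E:V} of $V$. First I would note that the elements of $\dom V$ are the vectors $\begin{bmatrix}Mf\\ C_2f\end{bmatrix}$ ($f\in\cH$), and that, because $P_\cH$ annihilates the $\sL_1$-component, $\l P_\cH V$ carries $\begin{bmatrix}Mf\\ C_2f\end{bmatrix}$ to $\begin{bmatrix}\l Nf\\ 0\end{bmatrix}$; hence
$$
(I-\l P_\cH V)\dom V=\left\{\begin{bmatrix}(M-\l N)f\\ C_2f\end{bmatrix}:f\in\cH\right\}.
$$
This is a graph-like manifold sitting over $\ran(M-\l N)$ whose $\sL_2$-component is determined by the $\cH$-component; in particular its range can never exhaust $\cH\oplus\sL_2$, which is exactly why the definition demands $1\in\wh\r(\l P_\cH V)$ rather than $1\in\r(\l P_\cH V)$.

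For part 1), in the direction $\l\in\r(M,N)\Rightarrow\l\in\r(V,\sL_2)$ I would use that if $M-\l N$ is boundedly invertible then the manifold above is the graph of the bounded operator $C_2(M-\l N)^{-1}$ over $\cH$, a closed proper subspace; together with $\ker(I-\l P_\cH V)=\{0\}$ (immediate from injectivity of $M-\l N$) this gives $1\in\wh\r(\l P_\cH V)$. Adjoining $\begin{bmatrix}0\\ \sL_2\end{bmatrix}$ frees the $\sL_2$-coordinate, so surjectivity of $M-\l N$ makes the sum all of $\cH\oplus\sL_2$ and injectivity makes it direct. For the converse I would read off the two clauses separately: directness of the sum forces $(M-\l N)f=0\Rightarrow C_2f=0$ (otherwise $\begin{bmatrix}0\\ C_2f\end{bmatrix}$ is a nonzero vector in the intersection), while $\ker(I-\l P_\cH V)=\{0\}$ then forces $Mf=0$, whence also $\l Nf=Mf=0$; and the surjectivity half of~\eqref{lreg} is precisely $\ran(M-\l N)=\cH$. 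Thus $\ker(M-\l N)\subset\ker M\cap\ker C_2$, which is $\{0\}$ under the standing non-degeneracy of the data (no nonzero $f$ is annihilated by all of $M,N,C_1,C_2$; by the fundamental identity~\eqref{LSt} such an $f$ would in any case also satisfy $C_1f=0$), so $M-\l N$ is bijective and $\l\in\r(M,N)$.

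Part 2) is the same computation run on $V^{-1}$, which by~\eqref{E:V} sends $\begin{bmatrix}Nf\\ C_1f\end{bmatrix}$ to $\begin{bmatrix}Mf\\ C_2f\end{bmatrix}$: replacing the data $(M,N,C_2,\sL_2)$ by $(N,M,C_1,\sL_1)$ and $\l$ by $\ov\l$ turns $(I-\ov\l P_\cH V^{-1})\ran V$ into the graph-like manifold over $\ran(N-\ov\l M)$, so that~\eqref{lreg2} becomes the bijectivity of $N-\ov\l M$, i.e. $0\in\r(N-\ov\l M)$, which is $\ov\l\in\r(N,M)$. Finally part 3) is immediate from the definition together with 1) and 2): $\l\in\r_V(\sL_2,\sL_1)$ means $\l\in\r(V,\sL_2)$ and $\ov\l\in\r(V^{-1},\sL_1)$, hence $\l\in\r(M,N)$ and $\ov\l\in\r(N,M)$; reading $\r(N,M)^*$ as the reflected set $\{\l:\ov\l\in\r(N,M)\}$, this says exactly $\l\in\r(M,N)\cap\r(N,M)^*$.

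I expect the main obstacle to be the converse halves of 1) and 2), where injectivity of the pencil must be squeezed out of the interplay between the kernel-triviality clause and the directness clause (neither clause alone suffices), while keeping honest track of the fact that $\l P_\cH V$ is genuinely multivalued and that the $\sL_2$-coordinate of its resolvent manifold is pinned to the state-coordinate. Getting this bookkeeping right is also what pins down $\wh\r$ rather than $\r$ as the correct hypothesis, and is the place where the argument is most easily mishandled.
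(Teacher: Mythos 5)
Your proof is correct and follows essentially the same route as the paper: both unwind \eqref{lreg} into the statement that every vector $\begin{bmatrix}f\\ u_2\end{bmatrix}$ decomposes uniquely as $\begin{bmatrix}(M-\l N)h\\ C_2h+l_2\end{bmatrix}$, which is exactly bijectivity of the pencil, and then part 2) is the same computation for $V^{-1}$ with $(N,M,C_1,\sL_1,\ov\l)$ in place of $(M,N,C_2,\sL_2,\l)$. You are in fact more careful than the paper in extracting injectivity of $M-\l N$ from the two clauses of the definition (the paper simply asserts that uniqueness of the representation yields invertibility), and the non-degeneracy condition $\ker M\cap\ker N\cap\ker C=\{0\}$ that you invoke at that point is indeed needed but is left implicit in the paper.
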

\begin{proof}
$1)$ The statement $\l\in\r(V,\sL_2)$ means that for every vector $
 \begin{bmatrix}
  f\\
  u_2
 \end{bmatrix}
 \in
 \begin{bmatrix}
  \cH\\
  \sL_2
 \end{bmatrix}
$ there exist uniquely determined vectors $h\in \cH$ and
$l_2\in\sL_2$ such that
$$
 \begin{bmatrix}
 f\\
 u_2
\end{bmatrix}=
(I-\l P_\cH V)
\begin{bmatrix}
 Mh\\
 C_2h
\end{bmatrix}+
\begin{bmatrix}
 0\\
 l_2
\end{bmatrix},
$$
$$
\begin{bmatrix}
 f\\
 u_2
\end{bmatrix}=
\begin{bmatrix}
 (M-\l N)h\\
 C_2h+l_2
\end{bmatrix}.
$$
The condition of the unique representation for every vector $
\begin{bmatrix}
 f\\
 u_2
\end{bmatrix}
$ gives an invertibility of $M-\l N$.

Conversely, let $0\in\r(M-\l N)$, then for every $
 \begin{bmatrix}
  f\\
  u_2
 \end{bmatrix}
 \in
 \begin{bmatrix}
  \cH\\
  \sL_2
 \end{bmatrix}
$ one can define vectors $h:=(M-\l N)^{-1}f$ and $l_2:=u_2-C_2(M-\l
N)^{-1}f$. The vector $
 \begin{bmatrix}
  f\\
  u_2
 \end{bmatrix}$ can be represented in the following way

$$
 \begin{bmatrix}
 f\\
 u_2
\end{bmatrix}=
(I-\l P_\cH V)
\begin{bmatrix}
 M(M-\l N)^{-1}f\\
 C_2(M-\l N)^{-1}f
\end{bmatrix}+
\begin{bmatrix}
 0\\
 u_2-C_2(M-\l N)^{-1}f
\end{bmatrix}.
$$
$2)$ Let the statement $\ov \l\in\r(V^{-1},\sL_1)$ hold. This means
that for every vector $
 \begin{bmatrix}
  f\\
  u_1
 \end{bmatrix}
 \in
 \begin{bmatrix}
  \cH\\
  \sL_1
 \end{bmatrix}
$ there exists uniquely determined vectors $h\in \cH$ and
$l_1\in\sL_1$, such that
$$
 \begin{bmatrix}
 f\\
 u_1
\end{bmatrix}=
(I-\ov \l P_\cH V^{-1})
\begin{bmatrix}
 Nh\\
 C_1h
\end{bmatrix}+
\begin{bmatrix}
 0\\
 l_1
\end{bmatrix},
$$
$$
\begin{bmatrix}
 f\\
 u_1
\end{bmatrix}=
\begin{bmatrix}
 (N-\ov \l M)h\\
 C_1h+l_1
\end{bmatrix}.
$$
Therefore, the operator $N-\ov \l M$ is invertible (i.e.
$\ov\l\in\r(N,M)$).

The converse proposition is proved in the way similar to the
converse proposition of 1).

The proposition 3) follows from 1) and 2).
\end{proof}

\begin{corollary}
For $a\in \mathbb{T}$ we have
$\r(V,\sL_2)=\r(V^{-1},\sL_1)=\r_V(\sL_2,\sL_1)$.

\end{corollary}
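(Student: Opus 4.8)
The plan is to deduce the statement entirely from the preceding Proposition, which already reduces each of the three sets to an invertibility condition for the pencil $M-\l N$, combined with the single identity $\ov a=1/a$ that is available precisely when $a\in\mathbb{T}$.

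First I would record the three equivalences furnished by the Proposition: $\l\in\r(V,\sL_2)$ iff $\l\in\r(M,N)$, i.e. $M-\l N$ is boundedly invertible; $\ov\l\in\r(V^{-1},\sL_1)$ iff $\ov\l\in\r(N,M)$, i.e. $N-\ov\l M$ is boundedly invertible; and $\l\in\r_V(\sL_2,\sL_1)$ iff $\l\in\r(M,N)\cap\r(N,M)^*$. By the very definition of $\r_V(\sL_2,\sL_1)$, this last condition is nothing but the conjunction of the first two, so once the first two are shown to coincide on $\mathbb{T}$ the third follows automatically.

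The crux is then a one-line computation valid only on the unit circle. For $a\in\mathbb{T}$ one has $|a|^2=1$, whence
\[
 N-\ov a M=-\ov a\,(M-aN),
\]
and since the scalar $-\ov a$ is nonzero, $M-aN$ is invertible if and only if $N-\ov a M$ is invertible. Reading this through the Proposition, it says exactly that $a\in\r(V,\sL_2)$ if and only if $\ov a\in\r(V^{-1},\sL_1)$. Hence the two defining requirements for membership in $\r_V(\sL_2,\sL_1)$ agree at every point of $\mathbb{T}$, the intersection in part 3 of the Proposition reduces to $\r(M,N)$, and the three sets coincide on the circle.

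I expect the only delicate point to be the bookkeeping of the conjugation: the set $\r_V(\sL_2,\sL_1)$ is defined by pairing $\l$ with $\ov\l$ in the $V^{-1}$ condition, and it is exactly this built-in reflection, matched against $\ov a=1/a$ for $|a|=1$, that makes $M-aN$ and $N-\ov a M$ scalar multiples of one another. Off the circle the relation instead links $\l$ with $1/\ov\l\neq\ov\l$, so the hypothesis $a\in\mathbb{T}$ is used essentially; I would therefore invoke it precisely at the factorization step and take care never to require conjugate symmetry of $\r(M,N)$ itself, which need not hold.
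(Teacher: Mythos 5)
The paper states this corollary without proof, and your argument supplies exactly the step it leaves implicit: combining the equivalences of the preceding Proposition with the identity $N-\ov a M=-\ov a(M-aN)$, valid precisely when $|a|=1$, so that $a\in\r(M,N)$ iff $\ov a\in\r(N,M)$ and the two conditions defining $\r_V(\sL_2,\sL_1)$ collapse into one. Your proof is correct and is the intended (essentially the only natural) route; your closing remark that the conjugation in the definition of $\r_V(\sL_2,\sL_1)$ pairs $\l$ with $\ov\l$, which is what the factorization on $\mathbb{T}$ matches, is exactly the right way to read the statement.
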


\begin{definition}
For $\l\in \r(V,\sL_2)$, by $\Pp(\l)$ we denote the skew projection
onto $\sL_2$ in the decomposition (\ref{lreg}) and introduce the
operator
$$
\cQ_{\sL_1}(\l):=P_{\sL_1} V(I-\l P_{\cH} V)^{-1}(I-\Pp(\l)).
$$
\end{definition}

Introduce the operator-function $W(\l)$ defined by
\begin{equation}\label{W}
J- W(\l)J W(\m)^*=(1-\l\ov\m)\cG(\l)\cG(\m)^\zx,
\end{equation}
where
\begin{equation}\label{JG}
J=\begin{bmatrix} I_{\sL_1}&0 \\0&-I_{\sL_2}
\end{bmatrix}, \quad\mathcal{G}(\l):=\begin{bmatrix} \cQ_{\sL_1}(\l) \\I_{\sL_2}-\Pp(\l)
\end{bmatrix}:\begin{bmatrix}\cH  \\\sL_2
\end{bmatrix}\to\begin{bmatrix} \sL_1 \\\sL_2
\end{bmatrix}.
\end{equation}

\begin{definition}
The operator-function $$W(\l)=\begin{bmatrix}  w_{11}(\l)& w_{12}(\l) \\
w_{21}(\l)& w_{22}(\l)\end{bmatrix}:\begin{bmatrix} \sL_1 \\\sL_2
\end{bmatrix}\rightarrow\begin{bmatrix}  \sL_1 \\\sL_2
\end{bmatrix}, (\l\in\r_V(\sL_2,\sL_1)),$$ satisfying the equality (\ref{W}) is called the resolvent matrix for the operator $V$.
\end{definition}
\begin{proposition}
If the assumptions (A1)-(A3) hold, then the
 resolvent matrix $W(\cdot)$ can be defined by the equality
\begin{equation}\label{WW}
W(\l)=I-(1-\l\ov a)\cG(\l)\cG(a)^\zx
J_\sL,\quad\l\in\r_V(\sL_2,\sL_1).
\end{equation}
\end{proposition}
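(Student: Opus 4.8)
The plan is to verify directly that the explicit function $W$ in \eqref{WW} solves the defining kernel equation \eqref{W} for all $\l,\m\in\r_V(\sL_2,\sL_1)$; since \eqref{W} is exactly the condition characterizing a resolvent matrix, this simultaneously establishes existence and exhibits the formula. First I would record well-definedness: assumption (A3), together with the Corollary ($\r(V,\sL_2)=\r_V(\sL_2,\sL_1)$ for $a\in\dT$) and part (1) of the preceding Proposition, gives $a\in\r_V(\sL_2,\sL_1)$, so $\Pp(a)$, $\cQ_{\sL_1}(a)$ and hence $\cG(a)$ and $W(a)$ are defined, with $J_\sL=J$ the signature $\mathrm{diag}(I_{\sL_1},-I_{\sL_2})$ used for the Pontryagin adjoint $[*]$ on $\sL$.

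Before the computation I would explain why the formula must take this shape. Setting $\l=\m=a$ in \eqref{W} and using $a\in\dT$ (so $1-a\ov a=0$) forces $J-W(a)JW(a)^*=0$, i.e.\ $W(a)$ is $J$-unitary; moreover \eqref{W} is invariant under $W(\l)\mapsto W(\l)W_0$ for a constant $J$-unitary $W_0$, so a resolvent matrix is determined only up to a right $J$-unitary factor and may be normalized by $W(a)=I$. Putting $\m=a$ in \eqref{W} with this normalization yields $W(\l)J=J-(1-\l\ov a)\cG(\l)\cG(a)^\zx$, which after right multiplication by $J$ (and $J^2=I$) is precisely \eqref{WW}. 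The remaining, substantive task is the general $\m$. For this I substitute \eqref{WW} into $J-W(\l)JW(\m)^*$, using $J^*=J$, $J^2=I$, $|a|=1$, and the relation $\cG(\m)^\zx=j_{\cH\oplus\sL_2}\cG(\m)^*J$ to rewrite $W(\m)^*=I-(1-\ov\m a)\cG(a)\cG(\m)^\zx J$. Expanding the product reduces \eqref{W} to the single two-point identity
\begin{equation*}
\begin{split}
(1-\l\ov\m)\,\cG(\l)\cG(\m)^\zx
&=(1-\l\ov a)\,\cG(\l)\cG(a)^\zx+(1-\ov\m a)\,J\cG(a)\cG(\m)^\zx J\\
&\quad-(1-\l\ov a)(1-\ov\m a)\,\cG(\l)\cG(a)^\zx\cG(a)\cG(\m)^\zx J,
\end{split}
\end{equation*}
so the whole Proposition rests on proving this identity for $\cG$.

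I would establish the displayed identity by a Christoffel--Darboux computation on $\cG$. The three ingredients are: the resolvent identity for $(I-\l P_{\cH}V)^{-1}$, which controls how $\cG(\l)\cG(\m)^\zx$ changes as $\l$ and $\m$ vary; the indefinite isometry of $V$ coming from \eqref{LSt} (equivalently $V^\zx V=I$ on $\dom V$ in the Pontryagin metric), which is what converts the telescoping resolvent differences into the combination $J$ minus a product term; and the defining relations for the skew projection $\Pp(\cdot)$ from the direct decomposition \eqref{lreg}, together with the block form of $\cG$ in \eqref{JG}, which are needed to compare the $\sL_1$- and $\sL_2$-components at the three points $\l,a,\m$. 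The main obstacle is exactly this three-point bookkeeping: because $\Pp(\l),\Pp(a),\Pp(\m)$ are non-orthogonal (skew) projections onto $\sL_2$ along three different subspaces, the cross terms do not cancel termwise and one must repeatedly use that $I-\Pp(\l)$ maps onto $(I-\l P_{\cH}V)\dom V$ (so that $V$ may be applied after the resolvent) while tracking the signature operators through every occurrence of the Pontryagin adjoint $[*]$. Once these cancellations are carried out the right-hand side collapses to $(1-\l\ov\m)\cG(\l)\cG(\m)^\zx$, which proves the identity and hence that the function \eqref{WW} satisfies \eqref{W}.
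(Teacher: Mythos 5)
The first thing to say is that the paper offers no proof of this Proposition at all: it passes directly to the computation of the explicit forms \eqref{BP}--\eqref{BW}, so there is no in-paper argument to compare with and your attempt must stand on its own. Your overall framing is the right one and is the standard route: \eqref{W} is invariant under $W(\l)\mapsto W(\l)W_0$ with $W_0$ a constant $J$-unitary matrix, putting $\l=\m=a\in\dT$ in \eqref{W} shows $W(a)$ is $J$-unitary, so one may normalize $W(a)=I$, and then $\m=a$ in \eqref{W} forces exactly the shape \eqref{WW}; the remaining content is a two-point identity for $\cG$, and reducing the Proposition to that identity is correct in principle.

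There are, however, two genuine problems. First, the identity you display is never proved: your last paragraph names the ingredients (resolvent identity, isometry of $V$ from \eqref{LSt}, the skew projections $\Pp(\cdot)$) and then asserts that ``once these cancellations are carried out the right-hand side collapses''---but that collapse \emph{is} the Proposition, and as written the argument is a plan rather than a proof. Second, the placement of the $J$'s in your identity rests on the convention $\cG(\m)^\zx=j_{\cH\oplus\sL_2}\cG(\m)^*J$, i.e.\ the $J$-metric on the target $\sL_1\oplus\sL_2$; but the paper's own formulas \eqref{kvf}, \eqref{BPs}, \eqref{BQs} and \eqref{BW} show that $\zx$ here pairs the \emph{standard} inner product on $\sL_1\oplus\sL_2$ with the $P$-metric on $\cH\oplus\sL_2$ (the adjoint carries a $P^{-1}$ and no $J$), so that $W(\m)^*=I-(1-\ov\m a)J\cG(a)\cG(\m)^\zx$ and the identity to be verified is
\begin{equation*}
(1-\l\ov\m)\cG(\l)\cG(\m)^\zx=(1-\l\ov a)\cG(\l)\cG(a)^\zx+(1-\ov\m a)\cG(a)\cG(\m)^\zx-(1-\l\ov a)(1-\ov\m a)\,\cG(\l)\cG(a)^\zx J\,\cG(a)\cG(\m)^\zx,
\end{equation*}
with the $J$ sandwiched inside the cross term, not your version with $J\cG(a)\cG(\m)^\zx J$ and a trailing $J$. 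In this corrected form the identity can actually be closed by a short computation rather than the three-point projection bookkeeping you describe: on the relevant block $\cG(\l)\cG(\m)^\zx$ is a product of the form $C(M-\l N)^{-1}P^{-1}(\,\cdot\,)^{-*}C^*$, the middle factor $\cG(a)^\zx J\cG(a)$ produces $C^*JC=M^*PM-N^*PN$ (assumption (A2)) sandwiched between resolvents of the pencil at $a$, and after writing $M=(M-aN)+aN$ with $|a|=1$ everything reduces to the scalar identity $(1-\l\ov a)+(1-\ov\m a)-(1-\l\ov a)(1-\ov\m a)=1-\l\ov\m$ together with the vanishing of the two first-order cross coefficients. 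Until that verification (or an operator-theoretic equivalent) is written out, the Proposition is not established.
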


 Let us find the explicit form of the operators
 $\Pp(\l)$, $\cQ_{\sL_1}(\l)$ and $W(\l)$. Let
$u,v\in\sL_2$ and $f,h\in \cH$, then
\begin{equation*}
\begin{bmatrix}
 0\\
u
\end{bmatrix}+(I-\l P_\cH V)\begin{bmatrix}
 Mh\\
C_2 h
\end{bmatrix}=\begin{bmatrix}
 (M-\l N)h\\
C_2 h+u
\end{bmatrix}=
\begin{bmatrix}
 f\\
v
\end{bmatrix}
\end{equation*}
Hence, the operator $\Pp(\l)$ is
\begin{equation}\label{BP}
\Pp(\l)\begin{bmatrix}
 f\\
v
\end{bmatrix}=
\begin{bmatrix}
 0&0\\
-C_2(M-\l N)^{-1}&I
\end{bmatrix}
\begin{bmatrix}
 f\\
v
\end{bmatrix}
\end{equation}
and the adjoint operator is
\begin{equation}\label{BPs}
\Pp(\l)^\zx\begin{bmatrix}
 0\\
u
\end{bmatrix}=
\begin{bmatrix}
0& -P^{-1}(M-\l N)^{-*}C_2^*\\
0&I
\end{bmatrix}
\begin{bmatrix}
 0\\
u
\end{bmatrix},\quad u\in\sL_2.
\end{equation}
Let us find the explicit form for $\cQ_{\sL_1}(\l)=P_{\sL_1}V(I-\l
P_\cH V)^{-1}(I-\Pp(\l))$:
$$
(I-\Pp(\l))\begin{bmatrix}
f \\
v
\end{bmatrix}=
\begin{bmatrix}
f \\
C_2(M-\l N)^{-1}f
\end{bmatrix}=(I-\l P_\cH V)\begin{bmatrix}
 Mh\\
C_2h
\end{bmatrix},
$$
hence, $h=(M-\l N)^{-1}f$.
$$
\cQ_{\sL_1}(\l)
\begin{bmatrix}
f \\
v
\end{bmatrix}=P_{\sL_1}V
\begin{bmatrix}
M h \\
C_2h
\end{bmatrix}=P_{\sL_1}
\begin{bmatrix}
N h \\
C_1h
\end{bmatrix}=C_1(M-\l N)^{-1}f,
$$
Therefore, the $\cQ_{\sL_1}(\l)$ is
\begin{equation}\label{BQ}
\cQ_{\sL_1}(\l)\begin{bmatrix}
 f\\
v
\end{bmatrix}=
\begin{bmatrix}
 0&0\\
C_1(M-\l N)^{-1}&0
\end{bmatrix}
\begin{bmatrix}
 f\\
v
\end{bmatrix}
\end{equation}
and the adjoint operator is
\begin{equation}\label{BQs}
\cQ_{\sL_1}(\l)^\zx\begin{bmatrix}
 0\\
v
\end{bmatrix}=
\begin{bmatrix}
0& P^{-1}(M-\l N)^{-*}C_1^*\\
0&0
\end{bmatrix}
\begin{bmatrix}
 0\\
v
\end{bmatrix},\quad v\in\sL_1.
\end{equation}
Then from \eqref{WW} we get the explicit form for the resolvent
matrix $W(\l)$:
\begin{equation}\label{BW}
W(\l)=I-(1-a\l)C(M-\l N)^{-1}P^{-1}(M-\ov aN)^{-*}C^*J,\quad a\in
\mathbb{T},
\end{equation}
where $J$ is defined by \eqref{JG}.

Consider the main properties of $W(\cdot)$. We will need one more
condition

(A4) The set of points $\mathbb{D}\setminus\r(M,N)$ consists of at
most of countable set of isolated points.
\begin{proposition}\label{Prop:Winp}
If the assumptions (A1)-(A4) are in force, then
$W(\cdot)\in\mathcal{P}_{\k'}(J)$ for some $\k'\leq\k$.
\end{proposition}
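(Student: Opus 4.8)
The plan is to read the negative-square bound directly off the factorization \eqref{W} and then to establish meromorphy of $W$ in $\dD$ from the explicit formula \eqref{BW}. First I would rewrite \eqref{W} as the assertion that the Potapov kernel of $W$ factors through the space $\cH\oplus\sL_2$:
\[
K^W_\m(\l)=\frac{J-W(\l)JW(\m)^*}{1-\l\ov\m}=\cG(\l)\cG(\m)^\zx ,
\qquad \l,\m\in\r_V(\sL_2,\sL_1)\cap\dD .
\]
By (A1) the space $(\cH,[\cdot,\cdot]_\cH)$ is a Pontryagin space with $\ind_-\cH=\k$, and since $\sL_2$ is a Hilbert space we have $\ind_-(\cH\oplus\sL_2)=\k$. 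This is the space through which $\cG(\l)$ factors, so it is the source of the bound.

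The heart of the argument is then the same factorization principle already invoked after \eqref{fact kern}. For any finite choice $\l_1,\dots,\l_n\in\r_V(\sL_2,\sL_1)\cap\dD$ and $u_1,\dots,u_n\in\sL_1\oplus\sL_2$, I would set $w_j:=\cG(\l_j)^\zx u_j\in\cH\oplus\sL_2$; then, using $K^W_{\l_j}(\l_k)=\cG(\l_k)\cG(\l_j)^\zx$ and the defining property of the adjoint $\cG(\cdot)^\zx$, the Gram matrix $\big((K^W_{\l_j}(\l_k)u_j,u_k)\big)_{j,k}$ coincides with $\big([w_j,w_k]_{\cH\oplus\sL_2}\big)_{j,k}$, the Gram matrix of the vectors $w_j$ in a Pontryagin space of negative index $\k$. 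Such a Gram matrix has at most $\k$ negative eigenvalues, so $\sq_-K^W=:\k'\le\k$ wherever the factorization is valid, in particular on the domain of holomorphy $\Omega_W$. This yields exactly the bound $\k'\le\k$.

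It remains to check that $W$ is genuinely meromorphic in $\dD$. From \eqref{BW},
\[
W(\l)=I-(1-a\l)C(M-\l N)^{-1}P^{-1}(M-\ov aN)^{-*}C^*J ,
\]
so $W$ depends on $\l$ only through the resolvent $(M-\l N)^{-1}$; the fixed point $a\in\dT\cap\r(M,N)$ required here is supplied by (A3). By (A4) the set $\dD\setminus\r(M,N)$ consists of at most countably many isolated points, and because of the rational dependence on the resolvent $W$ has at worst a pole at each of them. Hence $W$ is meromorphic in $\dD$ with $\Omega_W\supset\dD\cap\r_V(\sL_2,\sL_1)$, and together with the previous paragraph this gives $W\in\mathcal{P}_{\k'}(J)$ with $\k'\le\k$.

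The main obstacle I anticipate is not the negative-square count, which is essentially formal once \eqref{W} is in hand, but the bookkeeping in the meromorphy step: one must verify that the exceptional set is truly isolated inside $\dD$ and that $W$ extends meromorphically across it, i.e. that the poles of $(M-\l N)^{-1}$ do not combine with the scalar factor $(1-a\l)$ or with the frozen resolvent at $a$ to create singularities worse than poles. Here the explicit rational form \eqref{BW} together with (A4) is precisely what makes the extension work, so I would carry out this verification directly from that formula.
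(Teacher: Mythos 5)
Your argument is essentially the paper's own proof: the author likewise reads the bound $\sq_-K^W\le\k$ directly off the factorization \eqref{W}, rewriting the quadratic form $\sum_{i,j}(K_{\omega_j}(\omega_i)h_i,h_j)\xi_i\ov\xi_j$ as the self inner product of $\sum_i\cG(\omega_i)^\zx h_i\xi_i$ in the Pontryagin space $\cH\oplus\sL_2$ of negative index $\k$. Your additional verification that $W$ is meromorphic in $\dD$ via \eqref{BW} and (A4) is a point the paper leaves implicit, but it does not change the route.
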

\begin{proof}
This follows from the equality \eqref{W}. Indeed, for some
$n\in\mathbb{N}$ one get
\begin{equation}\label{kvf}
\begin{split}
&\sum_{i,j=1}^n\sk({K_{\omega_j}(\omega_i)h_i,h_j})_{\sL_1\oplus\sL_2}\xi_i\ov\xi_j=
\sum_{i,j=1}^n\sk({\cG(\omega_i)\cG(\omega_j)^\zx h_i,h_j})_{\sL_1\oplus\sL_2}\xi_i\ov\xi_j\\
&=\sk[{\sum_{i=1}^n\cG(\omega_i)^\zx
h_i\xi_i,\sum_{j=1}^n\cG(\omega_j)^\zx h_j\xi_j}]
\end{split}
\end{equation}
Hence, the quadratic form has at most $\k$ negative squares.
\end{proof}
\begin{proposition}\label{Prop:WinP}
Let the assumptions (A1)-(A4) hold. Then
 $W(\cdot)\in\mathcal{P}_{\k}(J)$ if and only if
\begin{equation}\label{kerpen}
\bigcap_{\l\in\r(M,N)}\ker C(M-\l N)^{-1}=\{0\}.
\end{equation}
\end{proposition}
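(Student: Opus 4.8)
The plan is to prove the two implications separately, using the factorization identity \eqref{W} as the central tool. Recall from Proposition~\ref{Prop:Winp} that $W(\cdot)\in\mathcal{P}_{\k'}(J)$ with $\k'\le\k$, and that by \eqref{kvf} the number of negative squares of the kernel $K_\omega^W(\l)$ equals the number of negative squares of the Gram-type form built from the vectors $\cG(\omega_i)^\zx h_i$ in the Pontryagin space $\cH\oplus\sL_2$. Since $\ind_-(\cH\oplus\sL_2)=\k$ (as $\sL_2$ is a Hilbert space and $\ind_-\cH=\k$), the deficiency $\k-\k'$ measures exactly how far the closed linear span of the ranges $\ran\,\cG(\l)^\zx$, taken over $\l\in\r_V(\sL_2,\sL_1)$, fails to fill up the maximal negative part of $\cH\oplus\sL_2$. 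Thus the equality $\k'=\k$ should be equivalent to the statement that this span is nondegenerate enough to carry all $\k$ negative squares, and the whole proof reduces to translating that span condition into the kernel condition \eqref{kerpen}.

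\textbf{The key computation.} First I would compute $\cG(\l)^\zx$ explicitly from \eqref{JG}, \eqref{BP}, \eqref{BPs} and \eqref{BQs}. Using $\cG(\l)=\begin{bmatrix}\cQ_{\sL_1}(\l)\\ I_{\sL_2}-\Pp(\l)\end{bmatrix}$ and the adjoint formulas already derived, one finds that for $v\in\sL_1$ and $u\in\sL_2$ the action of $\cG(\l)^\zx$ on $\begin{bmatrix}v\\ u\end{bmatrix}$ is governed by the operator $P^{-1}(M-\l N)^{-*}C^*$ applied to the incoming data; the essential point is that the $\cH$-component of $\ran\,\cG(\l)^\zx$ is precisely $P^{-1}(M-\l N)^{-*}(C_1^*\sL_1 - C_2^*\sL_2)$, i.e.\ the range of $P^{-1}(M-\l N)^{-*}C^*$. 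Taking $[\perp]$-orthocomplements in the Pontryagin inner product $[\cdot,\cdot]_\cH=(P\cdot,\cdot)_\cH$ converts the $P^{-1}(M-\l N)^{-*}$ factor into $(M-\l N)^{-1}$ acting on the left and cancels the $P$, so that the degeneracy of the span is controlled exactly by $\bigcap_\l \ker\,C(M-\l N)^{-1}$.

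\textbf{Assembling the two directions.} For the direction $\eqref{kerpen}\Rightarrow W\in\mathcal{P}_\k(J)$, I would argue that if the intersection in \eqref{kerpen} is trivial then the closed span of $\{\ran\,\cG(\l)^\zx:\l\in\r(M,N)\}$ is all of $\cH\oplus\sL_2$ (up to a Hilbert-space piece that contributes no negative squares), so the form on the right of \eqref{kvf} attains the full index $\k$, forcing $\k'=\k$. Here analyticity of $\l\mapsto C(M-\l N)^{-1}$ on $\r(M,N)\supset\dD$ minus finitely many points, together with condition (A4), lets me pass from a pointwise kernel condition to the spanning statement by a standard identity-theorem argument: a vector $[\perp]$-orthogonal to every range lies in $\ker\,C(M-\l N)^{-1}$ for all $\l$. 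Conversely, for $W\in\mathcal{P}_\k(J)\Rightarrow\eqref{kerpen}$ I would take a nonzero $h$ in the intersection and show it produces a reducing direction in the negative part that the kernel never sees, whence $\k'<\k$, a contradiction. The main obstacle will be the degenerate/indefinite bookkeeping in the middle step: because $\cH$ is a Pontryagin rather than Hilbert space, "orthogonal complement of the span" need not split off cleanly, so I must verify that the negative part is actually captured by the $\l$-indexed ranges and is not hidden in an isotropic or neutral subspace — this is where the nondegeneracy of $P$ (assumption (A1), $0\in\r(P)$) and the regularity afforded by (A3)--(A4) must be used carefully to guarantee that the count of negative squares transfers faithfully between the kernel form and the geometry of the span.
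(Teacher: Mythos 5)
Your proposal follows the paper's proof essentially step for step: both reduce the number of negative squares of $K^W_\omega(\l)$ to the negative index of the Gram form on $\span\{\cG(\omega)^\zx u:\omega\in\r(M,N)\}$ via \eqref{kvf}, and both identify the $[\perp]$-orthogonal complement of that span in $(\cH,(P\cdot,\cdot))$ with $\bigcap_{\l}\ker C(M-\l N)^{-1}$ through the same adjoint computation $[h,\cG(\omega)^\zx u]=(C(M-\omega N)^{-1}h,u)$. The one point you explicitly flag as the remaining obstacle --- showing that a nonzero vector in the intersection actually lowers the negative index, i.e.\ that the complement of the span is not merely a positive or neutral piece invisible to the count of negative squares --- is exactly the step the paper also leaves implicit in its closing appeal to Proposition~\ref{Prop:Winp}, so your sketch is no less complete than the published argument.
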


\begin{proof}
Notice that the condition \eqref{kerpen} holds if and only if
$$
\span\sk\{{\{\cG(\omega_j)^\zx
h_j\}:\omega_j\in\r(M,N),h_j\in\sL_1\oplus\sL_2}\}
$$
dense in $\cH\oplus\sL_2$. Indeed, if $h[\p]\cG(\omega)^\zx u$,
where $\omega\in\r(M,N)$ and $u\in\sL_1\oplus\sL_2$ then
\begin{equation}
\begin{split}
&\sk({C(M-\omega N)^{-1}h,u})_{\sL_1\oplus\sL_2}=\sk({h,(M^*-\ov
\omega N^*)^{-1}C^*u})_{\cH\oplus\sL_2}\\&=\sk({Ph,P^{-1}(M^*-\ov
\omega N^*)^{-1}C^*u})_{\cH\oplus\sL_2}=\sk[{h,\cG(\omega)^\zx
u}]_{\cH\oplus\sL_2}=0.
\end{split}
\end{equation}
Therefore, $C(M-\omega N)^{-1}h=0$. To complete the prove we need to
use the previous Proposition \ref{Prop:Winp}.
\end{proof}
\begin{proposition}\label{Prop:WinU}
Let the assumptions (A1)-(A4) are in force and \eqref{kerpen} hold.
If in addition
\begin{equation}\label{mu pen}
m(\sigma(M,N)\cap\mathbb{T})=0,
\end{equation}
where $m(\cdot)$ is a Lebesgue measure, then
$W(\cdot)\in\mathcal{U}_{\k}(J)$.
\end{proposition}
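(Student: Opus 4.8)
The goal is to prove that under hypotheses (A1)–(A4), condition \eqref{kerpen}, and the measure-zero condition \eqref{mu pen} on the real-axis spectrum of the pencil $(M,N)$, the resolvent matrix $W(\cdot)$ lies in the $J$-inner class $\mathcal{U}_\k(J)$. By Proposition~\ref{Prop:WinP}, the hypotheses already give $W(\cdot)\in\mathcal{P}_\k(J)$, so the only thing left to establish is the $J$-unitarity relation \eqref{Jin}, namely
$$
J-W(t)JW(t)^*=0\quad\text{a.e. }t\in\mathbb{T}.
$$
Thus the plan is: start from the defining identity \eqref{W}, specialize it to the diagonal $\l=\m$ on the unit circle, and show the right-hand side vanishes almost everywhere.

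The key computational step is to set $\m=\l$ in \eqref{W}, which yields
$$
J-W(\l)JW(\l)^*=(1-|\l|^2)\cG(\l)\cG(\l)^\zx.
$$
For $\l=t\in\mathbb{T}$ we have $|\l|^2=1$, so the scalar factor $(1-|\l|^2)$ is zero. Hence, \emph{provided $\cG(\l)\cG(\l)^\zx$ stays bounded as $\l\to t$ nontangentially}, the product $(1-|\l|^2)\cG(\l)\cG(\l)^\zx$ tends to $0$ and we obtain $J-W(t)JW(t)^*=0$. First I would record that $W(\cdot)$ is meromorphic in $\dD$ with boundary values defined a.e. on $\mathbb{T}$ (this is part of membership in the Potapov class $\mathcal{P}_\k(J)$), so the boundary identity is meaningful at almost every $t$. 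The role of \eqref{mu pen} is exactly to guarantee that the singular set $\sigma(M,N)\cap\mathbb{T}$, where $\cG(\l)=\begin{bmatrix}\cQ_{\sL_1}(\l)\\ I_{\sL_2}-\Pp(\l)\end{bmatrix}$ (built from $(M-\l N)^{-1}$, see \eqref{BP}, \eqref{BQ}) blows up, has Lebesgue measure zero; off that set $\cG$ extends continuously to the circle and the vanishing argument applies.

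The main obstacle is controlling the boundary behaviour of $\cG(\l)$ and justifying the passage $(1-|\l|^2)\cG(\l)\cG(\l)^\zx\to 0$ rigorously rather than only formally. The subtlety is that $\cG(\l)$ involves $(M-\l N)^{-1}$, which is holomorphic and bounded on $\dD\setminus\sigma(M,N)$ but may be unbounded as $\l$ approaches a boundary point of $\sigma(M,N)$; the factor $(1-|\l|^2)$ must kill any such growth. The clean way to handle this is to invoke the general structure theory of generalized Potapov functions: a function in $\mathcal{P}_\k(J)$ whose kernel \eqref{KerK} arises, as in \eqref{W}, from a factorization $(1-\l\ov\m)\cG(\l)\cG(\m)^\zx$ with $\cG$ holomorphic off a measure-zero boundary set automatically has nontangential boundary values satisfying \eqref{Jin} at every point where $\cG$ is nontangentially bounded. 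Concretely, for $t\notin\sigma(M,N)$ the function $\cG$ is holomorphic in a neighbourhood of $t$, hence $\cG(\l)\cG(\l)^\zx$ is bounded there and the factor $1-|\l|^2\to0$ forces the diagonal of the kernel to vanish at $t$; since by \eqref{mu pen} this accounts for almost every $t\in\mathbb{T}$, we conclude $J-W(t)JW(t)^*=0$ a.e. Combining this with $W\in\mathcal{P}_\k(J)$ from Proposition~\ref{Prop:WinP} gives $W\in\mathcal{U}_\k(J)$, completing the proof.
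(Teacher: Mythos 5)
Your argument is correct and is essentially the paper's own proof: both specialize the identity \eqref{W} to the diagonal $\l=\m=t\in\r(M,N)\cap\mathbb{T}$, observe that the factor $1-|t|^2$ vanishes there (so $J-W(t)JW(t)^*=0$), use \eqref{mu pen} to upgrade this to ``almost every $t\in\mathbb{T}$'', and combine with Proposition~\ref{Prop:WinP} to conclude $W\in\mathcal{U}_\k(J)$. Your extra care about nontangential boundedness of $\cG$ is a harmless elaboration of the same step, since for $t\in\r(M,N)\cap\mathbb{T}$ the operator $\cG(t)$ is directly defined and the identity can simply be evaluated there.
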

\begin{proof}
This follows from \eqref{W}. Hence, for $\m\in\r(M,N)\cap\mathbb{T}$
we get
$$
J- W(\m)J W(\m)^*=(1-|\m|^2)\cG(\m)\cG(\m)^\zx=0.
$$
In other words, $W(\cdot)$ is a $J$-unitary operator-function for
almost all $\m\in\mathbb{T}$. Using Proposition \ref{Prop:WinP} one
has $W(\cdot)\in\mathcal{U}_{\k}(J)$.
\end{proof}

The matrix-function $s(\l)$ is a component of the solution of the
Problem $AIP(\w\k)$. According to Theorem \ref{sAIP} this
matrix-function is the characteristic function of the unitary
colligation $\Delta=(\cH,\sL_2,\sL_1;U)$, where the operator $U$ is
a unitary extension of an isometric operator $V$. A description of
characteristic functions of unitary colligations was obtained in
\cite{B2}.

The following Theorem gives a description of solutions.
\begin{theorem}\label{s(l)}
Let the data of the Problem $AIP(\k)$ satisfies the assumptions
(A1)-(A3). The the solution set of $AIP(\k)$ is described by the
formula
\begin{equation}\label{sW}
s(\l)=(w_{11}(\l)\e(\l)+w_{12}(\l))(w_{21}(\l)\e(\l)+w_{22}(\l))^{-1},
\end{equation}
where $\e(\cdot)$ ranges over the class $S(\sL_2,\sL_1)$ and
$w_{21}(0)\e(0)+w_{22}(0)$ is invertible. In this case the mapping
$\Phi:\cH\to\cD(s)$ is uniquely defined by
$$
\Phi(t)=
\begin{bmatrix}
 I&-s(t)\\
 -s^*(t)&I
\end{bmatrix}
C(M-tN)^{-1},\ t\in\dT.
$$
\end{theorem}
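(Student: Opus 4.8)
**

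The plan is to combine Theorem~\ref{sAIP}, which reduces the Problem $AIP(\k)$ to describing the $(\sL_2,\sL_1)$-regular unitary extensions $U$ of the isometric operator $V$, with the description of characteristic functions of such extensions obtained in \cite{B2} via the resolvent matrix $W(\cdot)$. The linear-fractional transformation \eqref{sW} is exactly the coefficient description attached to the $J$-inner (or generalized Potapov) resolvent matrix $W(\cdot)$, whose explicit form was computed above in \eqref{BW}. So the proof is essentially a bookkeeping argument that threads together three facts: (1) every solution $\{s,\Phi\}$ corresponds to an $(\sL_2,\sL_1)$-regular extension $U$ of $V$; (2) by \cite{B2} the characteristic function $s(\cdot)$ of such an extension is given by the linear-fractional transformation of a Schur-class parameter $\e(\cdot)$ through the blocks of $W(\cdot)$; and (3) the formula for $\Phi$ follows by specializing condition (ii) of the problem once $s$ is fixed.

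First I would invoke Theorem~\ref{sAIP} to replace the description of $\{s,\Phi\}$ by the description of the set of $(\sL_2,\sL_1)$-regular unitary extensions $U$ of $V$ satisfying \eqref{A9}. Next I would apply the result of \cite{B2} on the parametrization of characteristic functions: under assumptions (A1)--(A3), the resolvent matrix $W(\cdot)$ in \eqref{WW}, with explicit entries \eqref{BW}, governs all admissible characteristic functions through the formula \eqref{sW}, where the free Schur parameter $\e(\cdot)\in S(\sL_2,\sL_1)$ ranges freely subject only to the normalization that $w_{21}(0)\e(0)+w_{22}(0)$ be invertible (this guarantees holomorphy of $s$ at the origin). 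One must check that as $\e$ runs over $S(\sL_2,\sL_1)$ the transformation \eqref{sW} produces precisely the generalized Schur functions $s\in S_{\w\k}(\sL_2,\sL_1)$ arising from regular extensions, and that distinct admissible $\e$ give distinct pairs $\{s,\Phi\}$, i.e.\ that the correspondence is bijective.

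For the formula for $\Phi$, I would start from condition (ii) of the Problem $AIP(\k)$, namely $\Phi Mh - t\,\Phi Nh = \begin{bmatrix} I&-s\\-s^*&I\end{bmatrix} Ch$. Once $s$ is fixed and $\l\in\r(M,N)$, substituting $h = (M-tN)^{-1}g$ and using that $M-tN$ is boundedly invertible for $t$ in the relevant set (by assumption (A3) and the hypothesis $\r(M,N)\supset\dD$ up to finitely many points), I would solve for the boundary values of $\Phi$ and obtain $\Phi(t)=\begin{bmatrix} I&-s(t)\\-s^*(t)&I\end{bmatrix}C(M-tN)^{-1}$; uniqueness of $\Phi$ then follows since the relation (ii) determines $\Phi Mh - t\Phi Nh$ and the density coming from $\r(M,N)\supset\dD$ pins down $\Phi$ on the whole space.

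The main obstacle I expect is justifying that the parametrization \eqref{sW} is both exhaustive and injective in the indefinite ($\w\k>0$, and in particular $\k'\le\k$ rather than $\k'=\k$) setting: one has to rule out the degenerate $\e$ that would fail the regularity condition of Theorem~\ref{sAIP}, and to verify via Propositions~\ref{Prop:Winp}--\ref{Prop:WinU} that $W(\cdot)$ has the correct negative index so that the linear-fractional map lands in the right generalized Schur class. This is the step where the results of \cite{B2} and the sign-count computations \eqref{kvf} do the real work, and I would lean on them rather than reproving the colligation-theoretic description from scratch.
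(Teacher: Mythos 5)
Your proposal follows essentially the same route as the paper: both directions are handled by composing Theorem~\ref{sAIP} (solutions $\leftrightarrow$ $(\sL_2,\sL_1)$-regular unitary extensions $U$ of $V$) with Theorem~3 of \cite{B2} (characteristic functions of such extensions $\leftrightarrow$ linear-fractional transforms $T_{W}[\e]$ of Schur-class parameters), and the regularity of $U$ in the converse direction is supplied by Lemma~\ref{L:reg} exactly as you indicate. The only additions beyond the paper's own (very terse) proof are your derivation of the formula for $\Phi$ from condition~(ii) and your remarks on injectivity, which the paper leaves implicit.
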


\begin{proof}
Let $\{s,\Phi\}$ be a solution of the Problem $AIP(\k)$, where
$s(\cdot)\in S_{\k}(\sL_2,\sL_1)$ and is holomorphic on a
neighborhood of $0$. From Theorem \ref{sAIP} we get
\begin{equation}\label{E:th3.1}
s(\l)=P_{\sL_1}(I-\l UP_{\w\cH})^{-1}UP_{\sL_2}.
\end{equation}
Further, using Theorem 3 from \cite{B2}, we get
$s(\l)=T_{W(\l)}[\e(\l)]$, where $\e(\cdot)\in S(\sL_2,\sL_1)$, and
 $w_{21}(\cdot)\e(\cdot)+w_{22}(\cdot)$ is invertible at $0$.

 Conversely, let $\e(\cdot)\in S(\sL_2,\sL_1)$, $s(\cdot)=T_W[\e]\in S_{\k}(\sL_2,\sL_1)$
 and the matrix-function be holomorphic at $0$. According to Theorem 3 in \cite{B2}
 $s(\cdot)$ admits the representation \eqref{E:th3.1}. Since $s(\cdot)\in S_{\k}(\sL_2,\sL_1)$ (see Lemma
  \ref{L:reg}), the unitary operator $U$
 is a $(\sL_2,\sL_1)$-regular.
\end{proof}


\newpage

\end{document}